\newtheorem{theorem}{Theorem}
\newtheorem{lemma}[theorem]{Lemma}
\newtheorem{corollary}[theorem]{Corollary}
\theoremstyle{definition}
\theoremstyle{remark}
\newcommand{\C}{{\mathbb{C}}}
\newcommand{\F}{{\mathbb{F}}}
\newcommand{\ff}{\mathbf{f}}
\newcommand{\R}{{\mathbb{R}}}
\begin{document}

\title{Generalizations of the Szemer\'edi-Trotter Theorem}
\author{Saarik Kalia \and Micha Sharir \and Noam Solomon \and Ben Yang}
\maketitle

\begin{abstract}
We generalize the Szemer\'edi-Trotter incidence theorem, to bound the number of complete 
\emph{flags} in higher dimensions. Specifically, for each $i=0,1,\ldots,d-1$, we are given 
a finite set $S_i$ of $i$-flats in $\R^d$ or in $\C^d$, and a (complete) flag is a tuple
$(f_0,f_1,\ldots,f_{d-1})$, where $f_i\in S_i$ for each $i$ and $f_i\subset f_{i+1}$ for each $i=0,1,\ldots,d-2$. 
Our main result is an upper bound on the number of flags which is tight in the worst case.

We also study several other kinds of incidence problems, including (i) incidences between 
points and lines in $\R^3$ such that among the lines incident to a point, at most $O(1)$
of them can be coplanar, (ii) incidences with Legendrian lines in $\R^3$, a special class of
lines that arise when considering flags that are defined in terms of other groups, and (iii)
flags in $\R^3$ (involving points, lines, and planes), where no given line can contain too 
many points or lie on too many planes. The bound that we obtain in (iii) is nearly tight in 
the worst case.

Finally, we explore a group theoretic interpretation of flags, a generalized version of
which leads us to new incidence problems.
\end{abstract}

\section{Introduction}\label{Intro}

Our starting point is the classical 1983 result of Szemer\'edi and Trotter~\cite{SzT}, which gives a worst-case
tight upper bound on the number of incidences between points and lines in the plane. Specifically, for a finite set $P$
of distinct points and a finite set $L$ of distinct lines in the plane, the number of incidences between
the points of $P$ and the lines of $L$, denoted $I(P,L)$, is the number of pairs $(p,\ell)\in P\times L$ such that $p\in\ell$.
One then has the following result.
\begin{theorem}[Szemer\'edi and Trotter~\cite{SzT}] \label{ST}
Let $P$ be a finite set of distinct points in $\R^2$ and $L$ a finite set of distinct lines in $\R^2$. Then 
$$
I(P,L) = O\left( |P|^{2/3}|L|^{2/3}+|P|+|L| \right).
$$
The bound is asymptotically tight in the worst case for any values of $|P|$ and $|L|$.
\end{theorem}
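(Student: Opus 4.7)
The plan is to follow Sz\'ekely's crossing-number proof, which is the cleanest route and will also be useful intuition for the higher-dimensional generalizations promised in the abstract. The main tool is the crossing-number inequality: for a simple graph $G=(V,E)$ drawn in the plane with $|E|\ge 4|V|$, the crossing number satisfies $\mathrm{cr}(G) \ge c|E|^3/|V|^2$ for an absolute constant $c>0$. I will assume this as a black box (it follows from Euler's formula together with a short probabilistic deletion argument).

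The construction is as follows. First I would discard lines incident to fewer than two points of $P$; these contribute at most $|L|$ incidences and can be added back at the end. For each remaining line $\ell\in L$, let $k_\ell\ge 2$ be the number of points of $P$ on $\ell$, order them along $\ell$, and draw an edge between each pair of consecutive points as a straight segment along $\ell$. Let $G$ be the resulting multigraph (one can pass to a simple graph by losing at most a constant factor). Then $|V|=|P|$ and $|E|=\sum_\ell (k_\ell-1) \ge I(P,L)-|L|$, so it suffices to bound $|E|$.

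The key estimate is the crossing-number upper bound: a crossing in this drawing can only occur between edges lying on two distinct lines, and any two distinct lines meet in at most one point, so $\mathrm{cr}(G) \le \binom{|L|}{2} = O(|L|^2)$. Now I would split into cases. If $|E|<4|V|$, then $I(P,L) < 4|P|+|L|$ and we are done. Otherwise the crossing-number inequality gives $|E|^3/|V|^2 = O(|L|^2)$, hence $|E| = O(|P|^{2/3}|L|^{2/3})$, and combining yields the stated bound.

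The main conceptual obstacle is really the crossing-number inequality itself; once that is available the rest is a clean geometric packaging. For worst-case tightness I would exhibit the standard grid construction: take $P$ to be a $\sqrt{|P|}\times\sqrt{|P|}$ integer grid (appropriately scaled) and $L$ to be the set of lines of slope $p/q$ with $|p|,|q|$ small, chosen so that each such line meets the grid in $\Theta(\sqrt{|P|})$ points; a parameter count matches the upper bound up to constants, and standard scaling arguments extend this to all ratios $|P|/|L|$.
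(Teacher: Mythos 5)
Your argument is correct, but note that the paper does not prove this statement at all: it is Theorem~\ref{ST}, imported verbatim from Szemer\'edi and Trotter~\cite{SzT} (with the complex analogue from~\cite{T,Z}) and used as a black box in the recursive flag-counting arguments. What you have written is the standard Sz\'ekely crossing-number proof, and it goes through as stated: discarding poor lines, taking consecutive-point segments, the crossing bound $\mathrm{cr}(G)\le\binom{|L|}{2}$, and the case split against $|E|\ge 4|V|$ are all sound. Two minor remarks. First, the graph you build is already simple, since two distinct points determine a unique line, so an edge $\{p,q\}$ can arise from only one line; no constant-factor loss is needed there. Second, for the tightness claim, the construction the paper actually invokes (in its lower-bound section for Theorem~\ref{Flags}) is Elekes's~\cite{El}: points of a $k\times 2kl$ grid and lines $y=ax+b$ with $a\in[1,l]$, $b\in[1,kl]$, which gives $\Theta(|P|^{2/3}|L|^{2/3})$ incidences in the range $|L|^{1/2}\le|P|\le|L|^2$; outside that range the linear terms $|P|$ and $|L|$ dominate and are trivially attained, which is what ``tight for any values of $|P|$ and $|L|$'' means. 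Your grid-of-slopes sketch is essentially this and is fine, but Elekes's version is the one that makes the parameter count painless. Compared with the original proof in~\cite{SzT}, which used cell decompositions, your route via the crossing-number inequality is shorter; be aware, though, that it does not transfer to $\C^2$, which is why the paper cites T\'oth and Zahl separately for Theorem~\ref{TZ}.
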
\noindent
The extension of this result to the complex case is also known:
\begin{theorem}[T\'oth~\cite{T} and Zahl~\cite{Z}] \label{TZ}
Let $P$ be a finite set of distinct points in $\C^2$ and $L$ a finite set of distinct lines in $\C^2$. Then 
$$
I(P,L) = O\left( |P|^{2/3}|L|^{2/3}+|P|+|L| \right).
$$
\end{theorem}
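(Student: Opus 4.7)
The plan is to transfer the problem to real four-space through the standard identification $\C^2\cong\R^4$, under which each complex line becomes a real affine $2$-flat and two distinct complex lines become two $2$-flats meeting in at most a single real point. This turns the question into a point/surface incidence problem in $\R^4$ to which the polynomial partitioning method of Guth and Katz can be applied, with Theorem~\ref{ST} serving as the base of the induction.

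First I would invoke the polynomial ham-sandwich theorem to produce a nonzero real polynomial $f\in\R[x_1,\ldots,x_4]$ of degree $D$, to be fixed later, whose zero set $Z(f)$ partitions $\R^4\setminus Z(f)$ into $O(D^4)$ open cells, each containing $O(|P|/D^4)$ points of $P$. A complex line $\ell\not\subset Z(f)$, viewed as a real $2$-plane, meets $Z(f)$ in a real algebraic curve of degree $O(D)$ within that plane, so it enters only $O(D^2)$ cells. Aggregating the per-cell incidences with H\"older's inequality and applying the target bound inductively inside each cell should yield an $O(|P|^{2/3}|L|^{2/3}+|P|+|L|)$ contribution from points lying in the open cells, in the standard partitioning-induction style.

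The crux is the incidences on $Z(f)$. I would split the lines into those not contained in $Z(f)$, each of which intersects $Z(f)$ in at most $D$ complex points by B\'ezout on the complex line and hence contributes at most $D$ incidences, and those fully contained in $Z(f)$, which is the hard case. The essential leverage is that complex lines form only a four-real-dimensional subfamily of all real $2$-flats in $\R^4$, so a real hypersurface of bounded degree can contain only a constrained set of them; quantitatively, I would want to show that the incidence structure restricted to $Z(f)$ still satisfies a Szemer\'edi-Trotter-type bound, either by a secondary polynomial partition performed on $Z(f)$ itself (reducing the effective ambient dimension) or by a direct algebraic study of how complex lines can lie on and foliate a real hypersurface of degree $D$. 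This is exactly where the complex case genuinely diverges from the real one, and is the step I expect to be the main obstacle.

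Finally, I would choose $D$ to balance the open-cell contribution, the $Z(f)$ contribution, and the induction base, closing the recursion and yielding Theorem~\ref{TZ} with the same exponents as in Theorem~\ref{ST}.
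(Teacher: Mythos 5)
The paper does not prove this statement at all: it is quoted as a known result of T\'oth and Zahl, and is used later as a black box. So the only question is whether your sketch would itself constitute a proof, and it would not. There is first a concrete error: for a complex line $\ell\not\subset Z(f)$, the restriction of the real polynomial $f$ to the corresponding real $2$-flat is a real polynomial in two real variables, not a complex polynomial in one complex variable, so B\'ezout does not give ``at most $D$ points''; the set $\ell\cap Z(f)$ is in general a real algebraic curve of degree at most $D$ inside that $2$-flat, and such a curve can contain arbitrarily many points of $P$. Consequently the incidences between points of $P\cap Z(f)$ and lines \emph{not} contained in $Z(f)$ are not disposed of by your argument either; this is already one of the places where the four-dimensional problem genuinely differs from the three-dimensional Guth--Katz setting, where a line not in $Z(f)$ really does meet it in at most $D$ points.

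Second, you explicitly leave open what you correctly identify as the crux: controlling incidences on $Z(f)$, including complex lines fully contained in the hypersurface. Saying you ``would want to show'' a Szemer\'edi--Trotter-type bound there, by a secondary partition or a direct algebraic study, is a restatement of the difficulty rather than a proof step; it is essentially the content of Zahl's paper, which requires a second-level partitioning on the three-dimensional variety $Z(f)$ together with a careful algebraic analysis of how $2$-flats can sit inside a bounded-degree real hypersurface in $\R^4$ (T\'oth's proof proceeds by an entirely different route, adapting the original Szemer\'edi--Trotter cutting argument over $\C^2$). As it stands, your proposal is an outline of the known strategy with the decisive steps missing and one incorrect reduction, so it does not establish the theorem; within this paper the correct move is simply to cite T\'oth and Zahl, as the authors do.
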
\noindent

In this paper we study the following generalization of the Szemer\'edi--Trotter bound. 
For $i=0,\ldots,d-1$, let $S_i$ be a finite set of distinct $i$-flats in $\R^d$ or in $\C^d$.
A (complete) \emph{flag} spanned by $\prod_{i=0}^{d-1} S_i$
is a sequence $(f_0,f_1,\ldots,f_{d-1})$, such that $f_i\in S_i$ for each $i$, and $f_i\subset f_{i+1}$
for $i=0,\ldots,d-2$. In this setup, Theorem~\ref{ST} bounds the number of flags spanned by $P\times L$ 
in the real plane, while Theorem~\ref{TZ} does the same for the complex plane.
Denote by $I(S_0,S_1,\ldots,S_{d-1})$ the number of flags spanned by $\prod_{i=0}^{d-1} S_i$.
Our main result is the following.

\begin{theorem}\label{Flags}
Let $S_i$ be a finite set of distinct $i$-flats in $\R^d$ or in $\C^d$, for $0\leq i\leq d-1$. Then
\begin{equation} \label{eq:flags}
I(S_0,\ldots,S_{d-1}) = O\left(\sum_{(a_0,\ldots,a_{d-1})\in\left\{0,{2/3},1\right\}^d}\prod_{i=0}^{d-1}|S_i|^{a_i} \right),
\end{equation}
where the constant of proportionality depends on $d$, and
where the ordered $d$-tuples $(a_0,\ldots,a_{d-1})$ in the sum are such that

\noindent (i)
no three consecutive $a_i$'s are nonzero,

\noindent (ii)
every $1$ is preceded and followed by $0$'s (if possible),

\noindent (iii)
every ${2/3}$ is either preceded or followed by another ${2/3}$, and

\noindent (iv)
every $0$ is either preceded or followed by a nonzero $a_i$.

\noindent
In other words, $(a_0,\ldots,a_{d-1})$ consists of pairs of consecutive $2/3$'’s or solitary $1$'’s separated by
one or two $0$'’s.  The bound is tight in the worst case.
\end{theorem}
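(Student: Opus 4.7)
The natural approach is induction on the dimension $d$, with base case $d=2$ supplied by Theorem~\ref{ST} over $\R$ and Theorem~\ref{TZ} over $\C$; the three allowed exponent tuples in dimension two, namely $(2/3,2/3)$, $(1,0)$, $(0,1)$, match the three summands of the Szemer\'edi--Trotter bound exactly.

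For the inductive step I would fix $S_0,\dots,S_{d-1}$ and partition the flags by their top flat. For each $H\in S_{d-1}$, any flag whose top is $H$ has every lower flat contained in $H$ by transitivity, so the partial chain $(f_0,\dots,f_{d-2})$ is a complete flag inside the affine space $H\cong\R^{d-1}$ (or $\C^{d-1}$) relative to the restricted sets $S_i^H := \{f\in S_i : f\subset H\}$. Hence
\[
I(S_0,\dots,S_{d-1}) \;=\; \sum_{H\in S_{d-1}} I_H\bigl(S_0^H,\dots,S_{d-2}^H\bigr),
\]
and applying the inductive hypothesis inside each $H$ reduces the problem to bounding, for every allowed $(d-1)$-dimensional tuple $(b_0,\dots,b_{d-2})$, the outer sum $\sum_{H}\prod_i|S_i^H|^{b_i}$.

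I would estimate each outer sum via H\"older's inequality together with the identity $\sum_H|S_i^H|=I(S_i,S_{d-1})$ and its trivial bound $I(S_i,S_{d-1})\le|S_i||S_{d-1}|$. Different H\"older exponents correspond to appending different new entries $a_{d-1}\in\{0,2/3,1\}$ to the inherited tuple $(b_0,\dots,b_{d-2})$: appending $2/3$ pairs with a preceding $2/3$ to close a $(2/3,2/3)$-block; appending $1$ opens a solitary $1$ (forcing $b_{d-2}=0$); appending $0$ introduces a separator. A direct combinatorial check then confirms that conditions (i)--(iv) are preserved at each inductive step and that every tuple allowed in dimension $d$ is produced this way.

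The main obstacle is precisely this combinatorial bookkeeping --- verifying that no forbidden tuple is produced and that every required tuple is --- with the endpoint cases being the most delicate, in particular when the appended $a_{d-1}$ either closes a $(2/3,2/3)$-block spanning positions $d-2$ and $d-1$ or plays the role of a trailing $0$-separator. For the worst-case tightness, I would construct, for each allowed tuple, an extremal configuration block by block: embed a tight Szemer\'edi--Trotter point-line arrangement inside a suitable $2$-flat for every $(2/3,2/3)$-block, and combine these with free product/pencil constructions along the solitary $1$'s and $0$-separators so as to realize $\prod_i|S_i|^{a_i}$ up to constants for any $(a_0,\dots,a_{d-1})$ allowed by (i)--(iv).
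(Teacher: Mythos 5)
Your decomposition by the top flat $H\in S_{d-1}$, $I(S_0,\dots,S_{d-1})=\sum_{H}I_H(S_0^H,\dots,S_{d-2}^H)$, is a valid identity and the inductive hypothesis does apply inside each $H$, but the aggregation step has a genuine gap: the quantity $\sum_H\prod_i|S_i^H|^{b_i}$ cannot in general be bounded by the target. The source of the trouble is that the Szemer\'edi--Trotter bound applied inside each $H$ is far from an equality when the restricted configuration is degenerate, and the waste accumulates across all $H$. Concretely, for $d=3$ take $|P|=\sqrt N$ collinear points on a line $\ell_0$, $|S|=\sqrt N$ planes through $\ell_0$, and $\sqrt N$ generic lines inside each plane, so $|L|=N$. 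Then $|P^H|=\sqrt N$, $|L^H|=\sqrt N$ for every $H$, so for the block $(b_0,b_1)=(2/3,2/3)$ you must control
\[
\sum_H |P^H|^{2/3}|L^H|^{2/3}=\sqrt N\cdot N^{1/3}\cdot N^{1/3}=N^{7/6},
\]
whereas the target bound $O(|P|^{2/3}|L|^{2/3}+|L|^{2/3}|S|^{2/3}+|P||S|+|L|)$ is $O(N)$ here (and the actual flag count is indeed $O(N)$, since each line meets $\ell_0$ in at most one point). So the per-$H$ inductive bound followed by any summation in $H$ is fundamentally too lossy for the $(2/3,2/3)$-block. Relatedly, the specific tool you propose---H\"older plus the trivial containment bound $\sum_H|S_i^H|=I(S_i,S_{d-1})\le|S_i||S_{d-1}|$---cannot produce the stated exponents when $\sum_i b_i>1$, since splitting $\sum_H\prod_i|S_i^H|^{b_i}$ into the factors $\left(\sum_H|S_i^H|\right)^{b_i}$ via H\"older requires $\sum_i b_i\le 1$, forcing an additional $\left(\sum_H 1\right)^{1-\sum_i b_i}$-type factor (or a second moment like $\sum_H|S_i^H|^2$) that spoils the bound.

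The paper's argument avoids this by not conditioning on the top flat at all. Instead it fixes an intermediate index $0<i<d-1$ and partitions $S_i$ into three parts: flats with at most one $(0,i)$-prefix, flats with at most one $(i,d-1)$-suffix, and the rest. The first two cases reduce immediately to shorter-flag counts $I(S_i,\dots,S_{d-1})$ and $I(S_0,\dots,S_i)$. For the remaining flats, it writes the flag count as $\sum_{f_i}(\text{\#prefixes through }f_i)(\text{\#suffixes through }f_i)$, applies Cauchy--Schwarz, and then uses the key geometric observation that an ordered pair of distinct prefixes sharing the same $f_i$ uniquely determines, at their point of last divergence $k$, the common $(k+1)$-flat they pass through; this converts $\sum_{f_i}\binom{\#\text{prefixes}}{2}$ into a sum of products of shorter-flag counts, to which the inductive hypothesis (via Lemma~\ref{Lift}) applies. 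You would need something of this flavor; no amount of combinatorial bookkeeping on the exponents rescues the H\"older step in the degenerate configuration above. Your lower-bound sketch (planar Szemer\'edi--Trotter extremal blocks glued along pencils and separators) is essentially correct and matches the paper's construction.
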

For instance, the bound for $d=4$ is
\begin{align*}
I(S_0,S_1,S_2,S_3) & = O\left(
|S_0|^{2/3}|S_1|^{2/3}|S_3| + |S_0||S_2|^{2/3}|S_3|^{2/3}+|S_0||S_2| \right. \\
& + \left. |S_0||S_3|+|S_1||S_3|+|S_1|^{2/3}|S_2|^{2/3} \right).
\end{align*}

In Section \ref{Variants}, we consider three variants of the incidence problem in $\R^3$
(we do not know whether they extend to $\C^3$ too).  
Theorem~\ref{Flags} seems to suggest that incidence bounds in higher dimensions are just suitable variants
of the planar Szemer\'edi--Trotter bound, where the only exponents that play a role are $0$, $1$, and $2/3$.
This of course is not the case, as evidenced from the rich body of works on incidences in higher dimensions,
inaugurated by the seminal studies of Guth and Katz~\cite{GK1,GK2}. For example, as shown in \cite{GK2},
the number of incidences between a set $P$ of $m$ distinct points and a set $L$ of $n$ distinct lines in $\R^3$,
such that no plane contains more than $B$ lines of $L$, is
\begin{equation} \label{eq:gk}
I(P,L) = O\left( |P|^{1/2}|L|^{3/4} + |P|^{2/3}|L|^{1/3}B^{1/3} + |P| + |L| \right) .
\end{equation}
The variants that we will consider in Section \ref{Variants} will lead to bounds with similar exponents.
The first problem involves incidences between points and lines in $\R^3$ such that, among the lines
incident to an input point, at most $O(1)$ can be coplanar. This generalizes the special case of incidences
with equally inclined lines (also referred to as ``light-like'' lines), which are lines that form a fixed
angle with, say, the $z$-axis. This special case has been studied by Sharir and Welzl~\cite{SW}, where several
upper and lower bounds have been established. Using a simplified version of the Guth--Katz machinery, we 
rederive one of these bounds, which, resembling (\ref{eq:gk}), is 
$O\left( |P|^{1/2}|L|^{3/4} + |P| + |L| \right)$, for the more general setup
assumed above; see Theorem~\ref{Light}.
(Note that a direct application of Guth and Katz~\cite{GK2} does not seem to work, because
we do not assume that no plane contains too many (concretely, $O(|L|^{1/2})$) lines.)

The second variant involves incidences between points and \emph{Legendrian lines} in $\R^3$; these are lines
that are orthogonal to the vector field $\F(x,y,z) = (-y,x,1)$ at each of their points. We exploit the
special properties of such lines to obtain the same bound
$O\left( |P|^{1/2}|L|^{3/4} + |P| + |L| \right)$ for the number of incidences with Legendrian lines.

The third variant involves flags formed by points, lines, and planes in $\R^3$, under the assumption
that each input line contains at most $b$ input points and is contained in at most $b$ input planes, for some
parameter $b$. Using the Guth--Katz bound, we establish a sharp bound on the number of flags, which,
for $|P|=|L|=|S|=N$, becomes 
$O\left(\min \{b^2N,\;N^{3/2}\log b+bN\}\right)$; see Theorem~\ref{3D'} for the details, including
thee general form of this bound. We also show that this bound is
nearly tight in the worst case (it is tight up to the $\log b$ factor). 

The reason why the results in Section~\ref{Variants} only hold over the reals is that they use,
as in~\cite{GK2}, the polynomial partitioning technique, which holds only over $\R$.

The motivation to study incidences with Legendrian lines is that they arise in 
a Lie group generalization of the flag variety, replacing the special linear group by the 
symplectic group (of dimension 4). We discuss this generalization in the concluding section, 
where the Lie algebraic context is explained in more detail. There is a fairly extensive study of
generalized flag varieties (see, e.g., \cite{BE}). This topic falls out of the main scope of this
paper, but it serves to motivate the study of new kinds of incidence and flag problems, an initial
step of which is taken in this paper.

\section{Counting Flags in $\R^d$ and $\C^d$}\label{General}
In this section we generalize the Szemer\'edi--Trotter Theorem (and its complex counterpart)
to flags in higher dimensions, establishing Theorem~\ref{Flags}. 
The analysis holds over both the real and the complex fields. This is because the proof is recursive, and at its bottom we face
incidences (i.e., containments) between $i$-flats and $(i+1)$-flats. As the following lemma shows, this can be reduced to
incidences between points and lines in a common plane, and the bound is then obtained via Theorem~\ref{ST} or Theorem~\ref{TZ}. 

Throughout this section, $S_i$ denotes a finite set of distinct $i$-flats (in some space $\R^d$ or $\C^d$),
and we will be considering $I(S_0,\ldots,S_{d-1})$, the number of flags spanned by $S_0\times\cdots\times S_{d-1}$.
We denote the right-hand side of (\ref{eq:flags}) by $f_{d-1}(|S_0|,\ldots,|S_{d-1}|)$, 
including a suitable sufficiently large constant of proportionality.

The proof of the theorem will also involve \emph{partial flags}.
That is, let $\ff = (f_0,\ldots,f_{d-1})$ be a flag in $S_0\times \cdots\times S_{d-1}$.  
For each pair of indices $0\le i\le j\le d-1$, put $\ff_{i,j} : = (f_i,\ldots,f_j)$, 
and refer to it as an $(i,j)$-(partial) flag. As for complete flags, we denote the number
of partial $(i,j)$-flags by $I(S_i,S_{i+1},\ldots,S_j)$, where the indices indicate the
dimension of the flats in the respective sets, and the dimension of the ambient space
is implicit in (and actually irrelevant for) this notation.

\begin{lemma}\label{Lift}
Let $K$ be $\R$ or $\C$. Suppose that
$$
I(S_0,\ldots,S_{d-1}) \le f_{d-1}(|S_0|,\ldots,|S_{d-1}|),
$$
for every collection of finite sets of distinct flats $S_0,\ldots,S_{d-1}$ in $K^d$,
where each $S_i$ consists of $i$-flats, for $i=0,\ldots,d-1$. Then, for all $0\le j\le m$,
$$
I(S_{j},\ldots,S_{j+d-1}) \le f_{d-1}(|S_{j}|,\ldots,|S_{j+d-1}|),
$$
for every collection of finite sets of distinct flats $S_{j},\ldots,S_{j+d-1}$ in $K^{d+m}$,
where, again, each $S_i$ consists of $i$-flats, for $i=j,\ldots,j+d-1$. 
\end{lemma}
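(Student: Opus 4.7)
The plan is to reduce the incidence problem among $j$-through-$(j+d-1)$-flats in $K^{d+m}$ to an incidence problem among $0$-through-$(d-1)$-flats in $K^d$, where the hypothesis applies directly. Two generic geometric operations accomplish this: (i) intersecting everything with a codimension-$j$ affine subspace, which uniformly lowers all flat dimensions by $j$, and (ii) projecting generically to $K^d$, which lowers the ambient dimension while leaving the (now lower-dimensional) flats unchanged in dimension.

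First, I would choose a generic affine subspace $H \subset K^{d+m}$ of dimension $d+m-j$ and replace each $f \in S_i$ by $\tilde f := f \cap H$. By transversality, $\dim \tilde f = i - j$, so the collection $\tilde S_j, \ldots, \tilde S_{j+d-1}$ consists of $0$-, $1$-, \ldots, $(d-1)$-flats inside $H \cong K^{d+m-j}$. Three genericity properties need to be verified: (a) the map $f \mapsto \tilde f$ is injective on each $S_i$; (b) $f_i \subset f_{i+1}$ implies $\tilde f_i \subset \tilde f_{i+1}$ (so incidences survive); and (c) $\tilde f_i \subset \tilde f_{i+1}$ implies $f_i \subset f_{i+1}$ (so no new incidences are created). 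Property (b) is immediate, while (a) and (c) each amount to avoiding finitely many Zariski-closed conditions of positive codimension on $H$, which is possible since $\bigcup_i S_i$ is finite.

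Step (ii) is needed only when $d+m-j > d$. I would compose with a generic linear projection $\pi : H \to K^d$. Since each flat in $\tilde S_i$ has dimension $i-j \le d-1 < d$, a generic $\pi$ maps it bijectively to a flat of the same dimension; analogous genericity arguments preserve distinctness and both directions of containment. The resulting sets $\hat S_j, \ldots, \hat S_{j+d-1}$ are flats of dimensions $0, \ldots, d-1$ in $K^d$, with the same cardinalities and the same flag count as $S_j, \ldots, S_{j+d-1}$. Applying the assumed bound in $K^d$ then yields $I(S_j, \ldots, S_{j+d-1}) \le f_{d-1}(|S_j|, \ldots, |S_{j+d-1}|)$.

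The main technical obstacle is the ``no new incidences'' direction (c) in both reduction steps: given $f_i \not\subset f_{i+1}$, one must confirm that generic slicing does not accidentally force $\tilde f_i \subset \tilde f_{i+1}$, and similarly that generic projection does not force $\hat f_i \subset \hat f_{i+1}$. Because $\dim(f_i \cap f_{i+1}) \le i - 1$ in this case, the $(i-j)$-dimensional slice $f_i \cap H$ is generically not contained in $f_i \cap f_{i+1}$, and the projection step is handled by the analogous assertion $\tilde f_i \not\subset \tilde f_{i+1} + \ker \pi$ for generic kernels. These are standard transversality arguments, but the bookkeeping required to make them uniform over the finitely many relevant pairs $(f_i, f_{i+1})$ is where the real care lies.
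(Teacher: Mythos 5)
Your argument is correct and takes essentially the same route as the paper: intersect everything with a generic $(d+m-j)$-flat to drop all dimensions by $j$, then project generically onto a $d$-flat and apply the hypothesis in $K^d$, using genericity to preserve distinctness and containments. The only (harmless) difference is your insistence on condition (c), that no new containments are created; the paper only needs the one-sided inequality that the flag count does not decrease under slicing and projection, so that direction is extra work rather than a necessary step.
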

\begin{proof}
Suppose that the bound in the hypothesis is true and that we are given finite sets 
$S_{j},\ldots,S_{j+d-1}$ of distinct flats of the corresponding dimensions in $K^{d+m}$. 
Let $\pi$ be a generic $(d+m-j)$-flat in $K^{d+m}$, so that, in particular, 
$\pi$ is not parallel to, nor contains any of the elements of $S_{j},\ldots,S_{j+d-1}$, 
and, for each $t=0,\ldots,d-1$, the intersection of any $(j+t)$-flat in $S_{j+t}$ with 
$\pi$ is a $t$-flat. (This is possible since all the sets $S_{j+t}$ are finite.)  
Moreover, if, for some $s<t$, a $(j+s)$-flat of $S_{j+s}$ is contained in a $(j+t)$-flat 
of $S_{j+t}$, then the intersections of these flats with $\pi$ will also be contained in one another.
Therefore, putting $S'_{t} := \{f\cap \pi \mid f\in S_{j+t}\}$, for each $t=0,\ldots,d-1$, we have
$$
I(S'_{0},\ldots,S'_{d-1})=I(S_{j},\ldots,S_{j+d-1}) ,
$$
where the quantity in the left-hand side counts the corresponding flags in $\pi$, regarded
as a copy of $K^{d+m-j}$, and these flags are partial, and not complete, unless $j=m$.

Assuming that $j<m$ (that is, $\pi$ is of dimension greater than $d$), choose a generic $d$-flat $\pi_0$ in $\pi$, so that, 
in particular, the projection of every element of $S'_{0},\ldots,S'_{d-1}$ onto $\pi_0$ 
has the same dimension as that of the original flat.  Again, if a $p$-flat is contained 
in a $q$-flat, their respective projections are also contained in one another.  
Therefore, putting $S''_{t} := \{\text{proj}_{\pi_0}(f) \mid f\in S'_{t}\}$, 
for $t=0,\ldots,d-1$, we have 
$$
I(S'_{0},\ldots,S'_{d-1}) \le I(S''_{0},\ldots,S''_{d-1}) ,
$$
where the right-hand side now counts complete flags in $\pi_0$, regarded as a copy of $K^d$.
By assumption, we have
$$
I(S''_{0},\ldots,S''_{d-1}) \le f_{d-1}(|S''_{0}|,\ldots,|S''_{d-1}|) ,
$$
and since $|S''_{t}|=|S_{j+t}|$ for each $t$, we get the asserted bound
$$
I(S_{j},\ldots,S_{j+d-1}) \le f_{d-1}(|S_{j}|,\ldots,|S_{j+d-1}|).
$$
\end{proof}

Before we go to the proof of Theorem~\ref{Flags} in arbitrary dimensions, we establish it for $d=3$, as a warm-up exercise.
\begin{theorem}\label{3D}
Let $P$ (resp., $L$, $S$) be a finite set of distinct points (resp., lines, planes) in $\R^3$ or in $\C^3$. Then
$$
I(P,L,S) = O\left(|P|^{2/3}|L|^{2/3}+|L|^{2/3}|S|^{2/3}+|P||S|+|L| \right).
$$
The bound is tight in the worst case.
\end{theorem}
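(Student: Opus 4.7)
The plan is to write $I(P,L,S)=\sum_{\ell\in L}p_\ell s_\ell$, where $p_\ell=|P\cap\ell|$ and $s_\ell=|\{\sigma\in S:\ell\subset\sigma\}|$, and to dyadically decompose $L$ by the size of $s_\ell$. Two pairwise incidence bounds, obtained via Lemma~\ref{Lift} by projecting to a generic $2$-flat and invoking Theorem~\ref{ST} or~\ref{TZ}, serve as the building blocks:
$$I(P,L)=O\bigl(|P|^{2/3}|L|^{2/3}+|P|+|L|\bigr),\qquad\sum_{\ell}s_\ell=I(L,S)=O\bigl(|L|^{2/3}|S|^{2/3}+|L|+|S|\bigr).$$

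Lines with $s_\ell\le 1$ contribute at most $\sum_\ell p_\ell=I(P,L)$, producing the target terms $|P|^{2/3}|L|^{2/3}$ and $|L|$ directly (and $|P|$, absorbed into $|P||S|$). For $j\ge 1$ set $L_j=\{\ell:2^j\le s_\ell<2^{j+1}\}$; two complementary bounds on $|L_j|$ will be the workhorses: (a) each $\ell\in L_j$ is the common intersection of at least $\binom{2^j}{2}$ distinct pairs of planes of $S$, and since two distinct planes meet in at most one line, $|L_j|=O(|S|^2/4^j)$; (b) $|L_j|\cdot 2^j\le\sum_\ell s_\ell$, giving $|L_j|=O\bigl((|L|^{2/3}|S|^{2/3}+|L|+|S|)/2^j\bigr)$. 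The projected Szemer\'edi--Trotter bound applied to $L_j$ yields $\sum_{\ell\in L_j}p_\ell s_\ell\le 2^{j+1}I(P,L_j)=O\bigl(2^j(|P|^{2/3}|L_j|^{2/3}+|P|+|L_j|)\bigr)$, so summing over $j\ge 1$ produces three sub-sums to handle.

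The side sub-sums $|P|\sum_{j\ge1}2^j\le 2|P||S|$ and $\sum_{j\ge1}2^j|L_j|\le 2\sum_\ell s_\ell$ stay within the target. The main sub-sum is $|P|^{2/3}\sum_{j\ge 1}2^j|L_j|^{2/3}$, for which I split the $j$-range according to which of (a), (b) is sharper (the crossover is near $2^j\approx |S|^2/\sum_\ell s_\ell$) and perform a geometric summation in each subrange. The residual terms that survive look like $|P|^{2/3}|L|^{2/9}|S|^{8/9}$ and $|P|^{2/3}|L|^{1/3}|S|^{2/3}$; both absorb into the four target terms via weighted AM--GM (for instance, $|P|^{2/3}|L|^{1/3}|S|^{2/3}\le\tfrac14|P|^{2/3}|L|^{2/3}+\tfrac14|L|^{2/3}|S|^{2/3}+\tfrac12|P||S|$). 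The main obstacle is this case-by-case dyadic computation together with the verification that every residual genuinely fits inside the four target terms.

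For the matching lower bound I will exhibit a separate configuration realizing each of the four terms: $|P|^{2/3}|L|^{2/3}$ by placing a Szemer\'edi--Trotter-tight planar configuration inside a single plane ($|S|=1$); $|L|^{2/3}|S|^{2/3}$ by lifting a planar ST-tight point-line configuration to lines and planes in the ambient $3$-space via the inverse of the projection in Lemma~\ref{Lift}, with one point placed on each lifted line; $|P||S|$ by taking $|P|$ collinear points on one line together with $|S|$ planes through that line (so $|L|=1$); and $|L|$ by $|L|$ pairwise-disjoint unit flags.
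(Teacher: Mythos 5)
Your proposal is correct, but the upper-bound argument takes a genuinely different (and heavier) route than the paper's. The paper splits $L$ into three classes: lines incident to at most one point of $P$ (flags bounded by $I(L,S)$), lines contained in at most one plane of $S$ (flags bounded by $I(P,L)$), and the remaining lines $L_0$ with $p_\ell,s_\ell\ge 2$; for $L_0$ a single Cauchy--Schwarz step, combined with the two symmetric facts that a pair of distinct points lies on at most one common line and a pair of distinct planes meets in at most one common line, gives $\sum_{\ell\in L_0}p_\ell s_\ell = O\bigl(\binom{|P|}{2}^{1/2}\binom{|S|}{2}^{1/2}\bigr)=O(|P||S|)$ outright --- no dyadic decomposition, no Szemer\'edi--Trotter bound inside the rich class, and no AM--GM absorption. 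Your route instead decomposes dyadically in $s_\ell$, plays the plane-pair bound $|L_j|=O(|S|^2/4^j)$ against $|L_j|2^j\le I(L,S)$, applies the lifted Szemer\'edi--Trotter bound on each level, and cleans up by weighted AM--GM; I checked that this goes through (the crossover at $2^{j}\approx|S|^2/\sum_\ell s_\ell$ yields a main term $O(|P|^{2/3}I(L,S)^{1/3}|S|^{2/3})$, and besides your two listed residuals there is also $|P|^{2/3}|S|$, which trivially sits below $|P||S|$; your stated AM--GM splittings are valid). What the paper's symmetric Cauchy--Schwarz buys is brevity and, more importantly, the template that generalizes directly to the $d$-dimensional recurrence behind Theorem~\ref{Flags}, exploiting both the point-pair and plane-pair uniqueness facts at once, whereas your argument uses only the plane-pair fact and would be awkward to iterate in higher dimensions. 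For the lower bound the paper simply defers to the general-$d$ construction; your four configurations are exactly that construction specialized to the tuples $(2/3,2/3,0)$, $(0,2/3,2/3)$, $(1,0,1)$, $(0,1,0)$ (with the usual caveat that the $2/3$-type terms require $|P|,|L|$, resp.\ $|L|,|S|$, in the Elekes range), so that part matches the paper.
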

\begin{proof}
Consider first the set of lines in $L$ that are incident to at most one point of $P$.  
Clearly, the number of flags involving such lines is at most $I(L,S)$, which, by 
Lemma \ref{Lift} and the Szemer\'edi--Trotter bound (Theorem~\ref{ST} or~\ref{TZ}),
is $O\left( |L|^{2/3}|S|^{2/3}+|L|+|S| \right)$.

Symmetrically, consider the set of lines in $L$ that are incident to (contained in) 
at most one plane of $S$. Again, the number of flags involving such lines is at most $I(P,L)$, 
which, by Lemma \ref{Lift} and the Szemer\'edi--Trotter bound, is 
$O\left( |P|^{2/3}|L|^{2/3}+|P|+|L| \right)$.

This leaves us with the set $L_0$ of lines that are incident to at least two points of $P$ and 
are contained in at least two planes of $S$. For each $\ell\in L_0$, let $P_\ell$ denote the set of 
points that are incident to $\ell$, and let $S_\ell$ denote the set of planes that contain $\ell$.  
By the Cauchy-Schwarz inequality, the number of flags involving the lines of $L_0$ is then at most
\begin{align*}
\sum_{\ell\in L_0}|P_\ell||S_\ell| & \leq \left(\sum_{\ell\in L_0}|P_\ell|^2\right)^{1/2}
\left(\sum_{\ell\in L_0}|S_\ell|^2\right)^{1/2} \\
& = O\left(
\left(\sum_{\ell\in L_0}\binom{|P_\ell|}2\right)^{1/2}\left(\sum_{\ell\in L_0}\binom{|S_\ell|}2\right)^{1/2} \right).
\end{align*}
The first sum can be interpreted as the overall number of pairs of points on the same line of $L_0$.
This quantity is at most $\binom{|P|}2$, because each pair of points can only lie on one common line.
Similarly, since any pair of planes of $S$ intersect in only one line, the second sum is at most 
$\binom{|S|}2$.  Therefore, 
$$
I(P,L_0,S)  = O\left( \binom{|P|}2^{1/2}\binom{|S|}2^{1/2} \right) = 
O\left((|P|^2)^{1/2}(|S|^2)^{1/2}\right) = O\left(|P||S| \right).
$$
Combining these three bounds, we get 
$$
I(P,L,S) = O\left(|P|^{2/3}|L|^{2/3}+|L|^{2/3}|S|^{2/3}+|P||S|+|L| \right),
$$
as asserted.
\end{proof}

\paragraph{Lower bounds.}
We skip the lower bound construction, because it is a special case of the construction 
for arbitrary dimension $d$, which will be described in detail below.

\begin{proof}[of Theorem \ref{Flags}]
The proof is a generalization of the proof of Theorem~\ref{3D}. It holds over both $\R$ and $\C$.

We recall the notation involving partial flags.
Let $\ff = (f_0,\ldots,f_{d-1})$ be a flag in $S_0\times \cdots\times S_{d-1}$.  
For each pair of indices $0\le i\le j\le d-1$, put $\ff_{i,j} : = (f_i,\ldots,f_j)$, 
and refer to it as a (partial) $(i,j)$-flag. In general, the $(i,j)$-flags that we will be
considering in the proof will be sub-flags of complete flags counted in $I(S_0,\ldots,S_{d-1})$.

A simple yet important observation is that, for any pair of flags $\ff$, $\ff'$, such that 
$f_{i-1}\ne f'_{i-1}$ but $f_i = f'_i$, the common $i$-flat of the pair is uniquely determined 
from $f_{i-1}\ne f'_{i-1}$: it is the unique $i$-flat that contains these two distinct $(i-1)$-flats.  
Symmetrically, if $f_{i+1}\ne f'_{i+1}$ but $f_i = f'_i$, the common $i$-flat of the pair is uniquely 
determined from $f_{i+1}\ne f'_{i+1}$: it is the unique intersection $i$-flat of these two distinct 
$(i+1)$-flats. (For an arbitrary pair $f_{i-1}\ne f'_{i-1}$ or $f_{i+1}\ne f'_{i+1}$, the common 
$i$-flat $f_i$ need not exist at all, but if it exists it must be unique.)

The theorem holds for $d=2$ (the standard Szemer\'edi--Trotter bound in Theorem~\ref{ST} or 
its complex counterpart in Theorem~\ref{TZ}),
and for $d=3$ (Theorem~\ref{3D}), but here we will only exploit its validity for $d=2$.

Assume then that $d\ge 3$, and fix some initial index $0<i<d-1$.
Put $S_{i,1}$ for the set of $i$-flats $f_i \in S_i$ such that $I(S_0,\ldots, S_{i-1},\{f_i\})\le 1$, and
$S_{i,2}$ for the set of $i$-flats $f_i\in S_i$ satisfying $I(\{f_i\}, S_{i+1},\ldots, S_{d-1})\le 1$, 
and put $S_{i,0} := S_i \setminus (S_{i,1}\cup S_{i,2})$.  We have
\begin{align}
\label {eq:sum}
I(S_0,\ldots,S_{d-1})&= I(S_0,\ldots,S_{i-1},S_{i,0},S_{i+1},\ldots,S_{d-1}) \nonumber\\
& + I(S_0,\ldots,S_{i-1},S_{i,1},S_{i+1},\ldots,S_{d-1})\\
& + I(S_0,\ldots,S_{i-1},S_{i,2},S_{i+1},\ldots,S_{d-1}).\nonumber
\end{align}
By definition, any flag
$$
(f_0,\ldots,f_{i-1},f_i,f_{i+1},\ldots, f_{d-1}) \in S_0 \times \cdots \times S_{i-1} \times S_{i,1} \times S_{i+1} \times \cdots \times S_{d-1}
$$
is uniquely determined by its suffix $(i,d-1)$-flag $(f_i,\ldots, f_{d-1})\in S_i \times \cdots \times S_{d-1}$.
Therefore,
\begin{equation}
\label {eq:s1} I(S_0,\ldots,S_{i-1},S_{i,1},S_{i+1},\ldots,S_{d-1}) \le I(S_i,\ldots, S_{d-1}).
\end{equation}
Similarly, we have
\begin{equation}
\label {eq:s2} I(S_0,\ldots,S_{i-1},S_{i,2},S_{i+1},\ldots,S_{d-1})\le I(S_0,\ldots, S_i).
\end{equation}
The overall number of flags $I:=I(S_0,\ldots,S_{i-1}, S_{i,0}, S_{i+1},\ldots, S_{d-1})$ can be written as
$$
\sum_{f_i\in S_{i,0}} I(S_0,\ldots,S_{i-1},\{f_i\}) \cdot I(\{f_i\},S_{i+1},\ldots,S_{d-1}) .
$$
Using the Cauchy-Schwarz inequality, we have
$$
I \le \left( \sum_{f_i\in S_{i,0}} I^2(S_0,\ldots,S_{i-1},\{f_i\})\right)^{1/2} \cdot 
\left( \sum_{f_i\in S_{i,0}} I^2(\{f_i\},S_{i+1},\ldots,S_{d-1})\right)^{1/2}.
$$
Each of the terms above is at most proportional to the number of pairs of $(0,i)$-prefixes or of 
$(i,d-1)$-suffixes of flags that end or start at a common $i$-flat. This follows since, for any 
$f_i\in S_{i,0}$, both $I(S_0,\ldots,S_{i-1},\{f_i\})$ and $I(\{f_i\},S_{i+1},\ldots,S_{d-1})$ are larger than $1$.
In other words, we have
\begin{align*}
\sum_{f_i\in S_{i,0}} I^2(S_0,\ldots,S_{i-1},\{f_i\}) & = O\left(
\sum_{f_i\in S_{i,0}} \binom{I(S_0,\ldots,S_{i-1},\{f_i\})}{2} \right) , \\
\sum_{f_i\in S_{i,0}} I^2(\{f_i\},S_{i+1},\ldots,S_{d-1}) & = O\left(
\sum_{f_i\in S_{i,0}} \binom{I(\{f_i\},S_{i+1},\ldots,S_{d-1})}{2} \right) . 
\end{align*}
Consider a pair $\ff$, $\ff'$ of distinct $(0,i)$-prefixes, with a common last $i$-flat $f_i$. Let $k<i$ denote the 
largest index for which $f_k\ne f'_k$ (so $0\le k\le i-1$).
Assign $(\ff,\ff')$ to their common $(k+1,i)$-flag. 
Observe that the common $(k+1)$-flat of $\ff$ and $\ff'$ is uniquely 
determined from their $(0,k)$-prefixes (in fact just from their $k$-th components).
In other words, 
$$
\sum_{f_i\in S_{i,0}} I^2(S_0,\ldots,S_{i-1},\{f_i\}) = O\left(
\sum_{f_i\in S_{i,0}} \binom{I(S_0,\ldots,S_{i-1},\{f_i\})}{2} \right)
$$
can be upper bounded by
$$
\sum_{k=0}^{i-1} I(S_{k+2},\ldots,S_i) \binom{I(S_0,\ldots,S_k)}{2} \le \sum_{k=0}^{i-1} I(S_{k+2},\ldots,S_i) I^2(S_0,\ldots,S_k) .
$$
(In this notation, the factor $I(S_{k+2},\ldots,S_i)$ is taken to be $1$ for $k=i-1$.) Indeed, ${\displaystyle \binom{I(S_0,\ldots,S_k)}{2}}$ counts all
pairs of flags in $I(S_0,\ldots,S_k)$, so it upper bounds the number of pairs that we actually want to count, namely those with different last 
(that is, $k$-th)
components. Moreover, a specific pair of partial flags that we do want to count uniquely determines the common $(k+1)$-flat of the pair, so the number of
$(0,i)$-flags that generate a specific pair of $(0,k)$-flags is at most the number $I(S_{k+2},\ldots,S_i)$ of $(k+2,i)$-flags. 

A symmetric argument applies to the suffixes term, and yields the bound
$$
\sum_{\ell=i+1}^{d-1} I(S_i,\ldots,S_{\ell-2}) I^2(S_\ell,\ldots,S_{d-1}) ,
$$
with the same convention that $I(S_i,\ldots,S_{\ell-2}) = 1$ for $\ell=i+1$. That is, we have obtained the following inequality
\begin{align*}
&I(S_0,\ldots,S_{i-1}, S_{i,0}, S_{i+1}, \ldots, S_{d-1})\\
&\le\left( \sum_{k=0}^{i-1} I^2(S_0,\ldots,S_k) I(S_{k+2},\ldots,S_i)\right)^{1/2}
\left( \sum_{\ell=i+1}^{d-1} I(S_i,\ldots,S_{\ell-2}) I^2(S_\ell,\ldots,S_{d-1}) \right)^{1/2} \\
&= O\left(\sum_{k=0}^{i-1} \sum_{\ell=i+1}^{d-1} I(S_0,\ldots,S_k) I(S_{k+2},\ldots,S_i)^{1/2}I(S_i,\ldots,S_{\ell-2})^{1/2} I(S_\ell,\ldots,S_{d-1})
\right) .
\end{align*}
We use the following (potentially crude) estimates
\begin{align*}
I(S_{k+2},\ldots,S_i)&\le I(S_{k+2},\ldots,S_{\ell-2})\\I(S_i,\ldots,S_{\ell-2})&\le I(S_{k+2},\ldots,S_{\ell-2}) ,
\end{align*}
and thus
$$
I(S_{k+2},\ldots,S_i)^{1/2} I(S_i,\ldots,S_{\ell-2})^{1/2} \le  I(S_{k+2},\ldots,S_{\ell-2}) ,
$$
where we use (an extension of) the previous notation, that $I(S_{k+2},\ldots,S_i) = 1$ 
for $k=i-1$, $I(S_i,\ldots,S_{\ell-2}) = 1$ for $\ell=i+1$, and $I(S_{k+2},\ldots,S_{\ell-2}) = 1$ 
for $k=i-1$ and $\ell=i+1$. We therefore have
\begin{align}\label{eq:s0}
I(S_0,\ldots, & S_{i-1}, S_{i,0}, S_{i+1},\ldots, S_{d-1}) \\
&= O\left(\sum_{k=0}^{i-1} \sum_{\ell=i+1}^{d-1} I(S_0,\ldots,S_k)I(S_{k+2},\ldots,S_{\ell-2}) I(S_\ell,\ldots,S_{d-1}) \right). \nonumber
\end{align}
Combining Equations~(\ref{eq:s1}),~(\ref{eq:s2}) and (\ref{eq:s0}), and substituting in Equation~(\ref{eq:sum}), we deduce the recurrence formula
\begin{align} \label{recur}
I(S_0,\ldots,S_{d-1}) & = O\Biggl(I(S_0,\ldots, S_i)+I(S_i,\ldots, S_{d-1})\Biggr.\\
&+\Biggl. \sum_{k=0}^{i-1} \sum_{\ell=i+1}^{d-1} I(S_0,\ldots,S_k)I(S_{k+2},\ldots,S_{\ell-2}) I(S_\ell,\ldots,S_{d-1}) \Biggr) . \nonumber
\end{align}
We now apply Lemma~\ref{Lift} and the induction hypothesis to each of the 
summands in the double sum in (\ref{recur}), and bound it, up to a constant factor, by
$$
\left(\sum_{(a_0,\ldots,a_k)}\prod_{i=0}^k |S_i|^{a_i}\right)
\left(\sum_{(a_{k+2},\ldots,a_{\ell-2})}\prod_{i=k+2}^{\ell-2} |S_i|^{a_i}\right)
\left(\sum_{(a_\ell,\ldots,a_{d-1})}\prod_{i=\ell}^{d-1} |S_i|^{a_i}\right),
$$
where sums are taken over tuples satisfying the requirements (i)--(iv) in the theorem statement
for the respective ranges of indices. This can be rewritten as
\begin{equation} \label{indsum}
\sum_{(a_0,\ldots,a_{d-1})}\prod_{i=0}^{d-1} |S_i|^{a_i} ,
\end{equation}
where each tuple $(a_0,\ldots,a_{d-1})$ that participates in the sum is of the form
$$
(a_0,\ldots,a_k,0,a_{k+2},\ldots,a_{\ell-2},0,a_\ell,\ldots,a_{d-1}) ,
$$
and each of its sub-tuples
$(a_0,\ldots,a_k)$, $(a_{k+2},\ldots,a_{\ell-2})$, $(a_\ell,\ldots,a_{d-1})$
satisfies (i)--(iv) for the respective ranges of indices.

We bound the two initial terms in (\ref{recur}) in a similar way, and expand each
resulting tuple $(a_0,\ldots,a_i)$ or $(a_i,\ldots,a_{d-1})$ into a full tuple by padding it
with a suffix or a prefix of $0$'s, as appropriate.

We claim that all the terms in the resulting bound involve tuples $(a_0,\ldots,a_{d-1})$ that
satisfy restrictions (i)--(iii) in the theorem statement.  
Indeed, there can be no three consecutive nonzero $a_j$'s, because this holds for the three
sub-tuples of the tuple, and the only entries that have been added are $a_{k+1}=0$ and $a_{\ell-1}=0$.  
Also, every $1$ is still preceded and followed by $0$'s, and every $2/3$ is either preceded or 
followed by another $2/3$, for similar reasons.  

Condition (iv) may be violated, but if we have a run of three or more consecutive $0$'s, we
can replace its inner portion (excluding the first and last $0$'s) by a valid sub-tuple that starts
and ends with nonzero values. For example a single inner $0$ can be replaced by $1$, two inner zeros by
a pair of $2/3$'s, three by $(1,0,1)$, four by, say, $(2/3,2/3,0,1)$, and so on. 
The product that corresponds to the modified tuple
is certainly at least as large as the original product, and this allows us to upper bound the sum
in (\ref{indsum}) by a similar sum that only involves valid tuples (that satisfy (i)--(iv) for the full range
$(0,\ldots,d-1)$). Note that this modification of the tuples $(a_0,\ldots,a_{d-1})$ might
result in tuples that appear multiple times. Nevertheless, the maximum  multiplicity of a tuple
is at most some constant that depends on $d$, so there is no effect on the asymptotic bound that we set
to establish.

Finally, substituting the modified bounds in (\ref{indsum}) into (\ref{recur}), we get the
desired bound asserted in the theorem. This establishes the induction step and thus 
completes the proof.
\end{proof}

\paragraph{Lower bounds.}
We next give examples that show that the bound in Theorem \ref{Flags} is tight in the worst case.  In particular,
for each of the terms, we can construct an example where the number of flags is  at least the bound given by that term.
First we note that the known lower bound constructions in two dimensions, e.g., the one due to Elekes~\cite{El}, can be lifted to any higher
dimension. That is, for any dimension $d\ge 3$, any $0\le i\le d-2$, and any pair of integers $m,n$ with $n^{1/2}
\le m\le n^2$, we can construct a set $S_i$ of $m$ distinct $i$-flats, and a set $S_{i+1}$ of $n$ distinct $(i+1)$
-flats in $\R^d$ or $\C^d$ with $\Theta(m^{2/3}n^{2/3})$ incidences (i.e., containments). To do so, we fix a
2-plane $\pi_0$ and place on it a set $P$ of $m$ points and a set $L$ of $n$ lines in a configuration that has
$\Theta(m^{2/3}n^{2/3})$ incidences, as in \cite{El}. Then we fix a generic $(i-1)$-flat $Q$, and note that
there exists a generic $(i+2)$-flat $R$ that contains $Q$ and $\pi_0$.
For each point $p\in P$ we construct the $i$-flat $p^*$ spanned by $p$ and $Q$, and for each line 
$\ell\in L$ we construct the $(i+1)$-flat spanned by $\ell$ and $Q$. If we choose $Q$ and $R$ 
(that is, $Q$ and $\pi_0$) sufficiently generic, we can ensure that all the flats $p^*$ are distinct 
$i$-flats, and all the flats $\ell^*$ are distinct $(i+1)$-flats. Moreover, all our flats contain $Q$ and
are contained in $R$. Finally, if $p\in P$ is incident to $\ell\in L$ then $p^*$ is contained in $\ell^*$. 
This completes this basic construction.

Recall that each of the tuples $(a_0,\ldots,a_{d-1})$ that appear in the bound of Theorem \ref{Flags} 
consists of pairs of consecutive ${2/3}$'s or solitary $1$'s separated by one or two $0$'s.
For each tuple $(a_0,\ldots,a_{d-1})$, we can construct sets $S_i$ of $i$-flats, 
of any prescribed nonzero sizes $|S_i|$, for $i=0,\ldots,d-1$, for which 
\begin{equation} \label{lb:flags}
I(S_0,\ldots,S_{d-1}) = \Omega\left(\prod_{i=0}^{d-1} |S_i|^{a_i} \right) .
\end{equation}
To do so, for each $i$ with $a_i=0$ we construct a generic $i$-flat $\pi_i$, so that these flats contain one another,
and include $\pi_i$ in $S_i$; if $|S_i|>1$, we augment it with $|S_i|-1$ additional arbitrary $i$-flats, but use
only $\pi_i$ in our construction.
For the remaining indices, if $a_i=1$, we take $S_i$ to be an arbitrary collection of $|S_i|$ $i$-flats,
all contained in $\pi_{i+1}$ and containing $\pi_{i-1}$ (whenever applicable), recalling that we must have
$a_{i-1}=a_{i+1}=0$. Finally, if $a_i=a_{i+1}={2/3}$, we apply the lifting of the planar lower bound
construction, as described above, to obtain sets $S_i$ of $|S_i|$ $i$-flats, and $S_{i+1}$ of $|S_{i+1}|$
$(i+1)$-flats, all contained in $\pi_{i+2}$ and containing $\pi_{i-1}$, with
$\Theta\left(|S_i|^{2/3}|S_{i+1}|^{2/3}+|S_i|+|S_{i+1}|\right)$ incidences (again, we must have $a_{i-1}=a_{i+2}=0$).
The overall number of flags in $\prod_{i=0}^{d-1} S_i$ clearly satisfies (\ref{lb:flags}).

\paragraph{Remark.}
A simple extension of Theorem \ref{Flags} yields similar worst-case tight bounds not only for complete flags 
but also for partial flags, namely, flags that are sequences of flats, of increasing dimensions, so that
each flat is contained in the next one, but where only some of the dimensions are being used. For example,
in the notation $I(S_0,S_1,S_3,S_4,S_5,S_7,S_8)$, $S_i$ is a finite set of distinct $i$-flats, for the indices
$i$ appearing in the notation, and $I(S_0,S_1,S_3,S_4,S_5,S_7,S_8)$ is the number of sequences
$(f_0,f_1,f_3,f_4,f_5,f_7,f_8)$, where $f_i\in S_i$ for each $i$, and $f_i\subset f_j$ for every pair $i<j$. 

To obtain a bound on the number of such partial flags, we simply break the sequence of indices into
maximal runs of consecutive indices, and apply the bound of Theorem~\ref{Flags} to each run separately.
Concretely, we obtain the following corollary.
\begin{corollary} 
Let $\sigma$ be a nonempty subsequence of $(0,\ldots,d-1)$, and write it as a
concatenation of maximal runs of consecutive indices, in the form
$$
\sigma = (i_1,\ldots,j_1,i_2,\ldots,j_2,\ldots,i_k,\ldots,j_k) ,
$$
where $i_t\le j_t$ for each $t$ and $i_{t+1} \ge j_t+2$, for $t=0,\ldots,k-1$.
For each $i\in\sigma$, let $S_i$ be a set of distinct $i$-flats in $\R^d$, and let
$I((S_t)_{t\in\sigma})$ denote the number of tuples $(f_t)_{t\in\sigma}$, where
$f_t\in S_t$ for each $t\in\sigma$, and $f_t\subset f_{t'}$ for each pair of consecutive
elements $t,t'\in\sigma$. Then we have
$$
I((S_t)_{t\in\sigma}) \le I(S_{i_1},\ldots,S_{j_1}) 
I(S_{i_2},\ldots,S_{j_2}) \cdots 
I(S_{i_k},\ldots,S_{j_k}) ,
$$
where each of the factors in the right-hand side is defined as in Lemma~\ref{Lift},
and can be bounded by the bound given in that lemma. The resulting bound is tight
in the worst case.
\end{corollary}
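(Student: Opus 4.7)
The plan is to treat the upper bound by decoupling the partial flag problem across the maximal runs of consecutive indices, and to match it by a lower bound that glues per-run lower-bound constructions through a generic nested chain of flats in the ``missing'' dimensions.

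For the upper bound, the key observation is that any $\sigma$-flag $(f_t)_{t\in\sigma}$ restricts on each maximal run $(i_s,i_s+1,\ldots,j_s)$ to a tuple that is exactly a complete flag of consecutive-dimension flats over that run: consecutive elements of $\sigma$ inside a run are consecutive integers, so the within-run containments $f_t\subset f_{t+1}$ are precisely the ones counted by $I(S_{i_s},\ldots,S_{j_s})$. Simply forgetting the inter-run containment constraints $f_{j_s}\subset f_{i_{s+1}}$ embeds the set of valid $\sigma$-flags into the Cartesian product of the per-run flag sets, which yields
$$
I((S_t)_{t\in\sigma}) \;\le\; \prod_{s=1}^k I(S_{i_s},\ldots,S_{j_s}).
$$
Each factor is now a count of flags of flats of consecutive dimensions, of the kind considered in Lemma~\ref{Lift}, and I would invoke Lemma~\ref{Lift} followed by Theorem~\ref{Flags} to bound it by the corresponding sum of monomials in $|S_{i_s}|,\ldots,|S_{j_s}|$.

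For tightness, fix, for each run $s$, an admissible exponent tuple satisfying (i)--(iv) over that run, and apply the lower-bound construction used for Theorem~\ref{Flags} separately to each run. To glue them, I would choose once and for all a generic nested chain $\pi_0\subset\pi_1\subset\cdots\subset\pi_{d-1}$ of background flats with $\dim \pi_i=i$, and carry out the run-$s$ construction so that every flat it produces contains $\pi_{i_s-1}$ and is contained in $\pi_{j_s+1}$ (with empty boundary conventions when $i_1=0$ or $j_k=d-1$). Since consecutive runs are separated by at least one absent dimension, the chain
$$
f_{j_s}\subset \pi_{j_s+1}\subset\cdots\subset \pi_{i_{s+1}-1}\subset f_{i_{s+1}}
$$
realizes the inter-run containment for free, so the per-run flag counts multiply and match the upper bound up to constants.

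The step I expect to be the main obstacle is the genericity bookkeeping in the gluing: one must verify that the per-run liftings (which themselves lift a planar Szemer\'edi--Trotter configuration through generic intermediate flats, as in the proof of Theorem~\ref{Flags}) do not interfere with each other, nor introduce extra containments with the background chain $(\pi_i)$ or with flats of other runs. I would resolve this by first fixing the chain $(\pi_i)$, and then performing each run's lifting using only the relative geometry between $\pi_{i_s-1}$ and $\pi_{j_s+1}$; distinct runs then operate in essentially independent quotient spaces, so no spurious containments arise and the construction realizes the product of the per-run lower bounds.
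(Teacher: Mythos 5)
Your proposal is correct and matches the paper's approach; the paper itself only sketches this corollary, stating that one should ``break the sequence of indices into maximal runs of consecutive indices, and apply the bound of Theorem~\ref{Flags} to each run separately'' for the upper bound, and that tightness ``can be argued via a modified version of the construction given above for the lower bound in Theorem~\ref{Flags}.'' Your filling in of both halves — dropping the inter-run containments to embed the $\sigma$-flags into the product of per-run flag sets, and gluing the per-run lower-bound constructions through a fixed nested background chain $\pi_0\subset\cdots\subset\pi_{d-1}$ so that the missing dimensions automatically certify $f_{j_s}\subset\pi_{j_s+1}\subseteq\pi_{i_{s+1}-1}\subset f_{i_{s+1}}$ — is exactly the intended argument, and your genericity concern is handled correctly because the runs occupy disjoint dimension ranges and interact only through the chain.
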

For instance, for the example just given, we have
$$
I(S_0,S_1,S_3,S_4,S_5,S_7,S_8) \le I(S_0,S_1)I(S_3,S_4,S_5)I(S_7,S_8) ,
$$
from which an upper bound can be worked out, as above.
The tightness of the bound can be argued via a modified version of the construction
given above for the lower bound in Theorem~\ref{Flags}.

\section{Variants of the Incidence Problem in $\R^3$}\label{Variants}

In this section, we consider three variants of the classical incidence problem studied in the previous section.  
As already discussed in the introduction, the motivation behind these problems is to show that, in many natural 
cases, the reduction of incidence problems in higher dimensions to the planar Szemer\'edi--Trotter bound, 
as done in the previous section, is too weak and leads to inferior bounds. The problems studied in this
section provide new examples where sharper bounds can be established, adding to the growing body of such 
results, initiated by Guth and Katz in~\cite{GK1,GK2}.  In all of these, we use the following polynomial
partitioning lemma due to Guth and Katz:
\begin{lemma}[Guth and Katz \cite{GK1}]\label{Partition}
If $S$ is a set of points in $\R^d$ and $D$ is an integer, then there is a nonzero $d$-variate polynomial $Q$ of degree at most $D$ so that $\R^d\setminus Z(Q)$ is the union of $O(D^d)$ open connected components, each of which contains $O\left(\frac{|S|}{D^d}\right)$ points of $S$.
\end{lemma}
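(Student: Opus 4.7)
The plan is to derive the lemma from the polynomial ham sandwich theorem by iterated halving, and then use Warren's theorem (a quantitative form of Milnor--Thom) to count the connected components of the complement. The key ingredient is the polynomial ham sandwich theorem: for any $k$ finite point sets in $\R^d$ there is a nonzero polynomial of degree $O(k^{1/d})$ whose zero set simultaneously bisects all of them, in the sense that each of the two open sides of the zero set contains at most half of each set. The standard proof embeds $\R^d$ via the Veronese map into a Euclidean space of dimension $\binom{d+D'}{d}-1$, with $D' = O(k^{1/d})$ chosen so that this ambient dimension is at least $k$, and then applies the classical ham sandwich theorem (equivalently, Borsuk--Ulam) in that higher-dimensional space.

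Second, I would iterate. Starting with $\mathcal S_0 = \{S\}$, at the $j$-th step I have a collection $\mathcal S_j$ of at most $2^j$ pairwise disjoint subsets of $S$, each of size at most $|S|/2^j$. Apply the polynomial ham sandwich theorem to $\mathcal S_j$ to produce a polynomial $p_j$ of degree $O(2^{j/d})$ bisecting every set in $\mathcal S_j$, and form $\mathcal S_{j+1}$ by splitting each set in $\mathcal S_j$ into its two open sides with respect to $p_j$; points landing on $Z(p_j)$ may be discarded, since they already lie in the zero set of the eventual product $Q$ and will not be counted in any connected component of the complement. After $J$ steps, set $Q := \prod_{j=0}^{J-1} p_j$, so that
$$
\deg Q \;=\; \sum_{j=0}^{J-1} O(2^{j/d}) \;=\; O(2^{J/d}),
$$
by summing a geometric series with ratio $2^{1/d}>1$.

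Third, I would calibrate and count cells. Choose $J$ so that $2^J = \Theta(D^d)$, which, after adjusting constants, forces $\deg Q \le D$. Each of the $2^J$ sign patterns of the factors $p_0,\ldots,p_{J-1}$ captures at most $|S|/2^J = O(|S|/D^d)$ points of $S$, and every connected component of $\R^d\setminus Z(Q)$ is contained in a single such sign pattern, so it contains $O(|S|/D^d)$ points of $S$. Finally, Warren's theorem applied to $Q$ bounds the number of connected components of $\R^d\setminus Z(Q)$ by $O(D^d)$, with the implicit constant depending only on $d$. The main obstacle is proving the polynomial ham sandwich theorem with the right dependence of the degree on $k$, in particular finding the appropriate constant in $O(k^{1/d})$ so that the geometric series telescopes to give $O(D)$ rather than something larger; once this is in hand, the rest is straightforward bookkeeping together with a standard appeal to Milnor--Thom.
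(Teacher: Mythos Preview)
The paper does not prove this lemma at all: it is stated with a citation to Guth and Katz~\cite{GK1} and then used as a black box in Section~\ref{Variants}. Your sketch is essentially the original Guth--Katz argument (polynomial ham sandwich via the Veronese embedding and Borsuk--Ulam, iterated halving, geometric series for the degree, Milnor--Thom/Warren for the component count), so there is nothing to compare; you have reproduced the proof the paper is citing rather than one the paper supplies.
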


\subsection{Not too many lines are both concurrent and coplanar}

First, we consider a generalized version of a problem studied by Sharir and Welzl in \cite{SW}, 
involving the number of incidences between a set $P$ of points and a set $L$ of ``light-like'' lines
(also referred to as equally inclined lines) in $\R^3$; these are lines that are parallel to some fixed 
double cone (such as $z^2=x^2+y^2$), or, equivalently, form a fixed angle with, say, the $z$-axis.
Sharir and Welzl~\cite{SW} gave an example with 
$\Theta(|P|^{2/3}|L|^{1/2})$ incidences.  They also proved that the number of incidences is
$O(|P|^{3/4}|L|^{1/2}\log|P|+|P|+|L|)$ and $O(|P|^{4/7}|L|^{5/7}+|P|+|L|)$. 
In Elekes et al.~\cite{EKS}, the latter bound was improved to $O(|P|^{1/2}|L|^{3/4}+|P|+|L|)$.  
Here we use the polynomial cell decomposition technique, in an analysis that resembles that of
Guth and Katz~\cite[Theorem 4.5]{GK2} and of Elekes et al.~\cite{EKS}, 
but is considerably simpler, to give a different proof of this latter
bound in a more general context.

\begin{theorem}\label{Light}
Let $b$ be a constant, and let $P$ be a set of points and $L$ a set of lines in $\R^3$,
so that each point of $P$ has at most $b$ coplanar lines of $L$ through it. Then
$$
I(P,L) = O\left(|P|^{1/2}|L|^{3/4}+|P|+|L|\right) ,
$$
where the constant of proportionality depends on $b$.
\end{theorem}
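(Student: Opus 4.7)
Write $m=|P|$ and $n=|L|$. My plan is a polynomial-partitioning argument in the spirit of Guth--Katz~\cite{GK2}, simplified by the coplanarity hypothesis. I will restrict attention to the range $n^{1/2}\le m\le n^{3/2}$, where the middle term $m^{1/2}n^{3/4}$ dominates; outside this range one of the linear terms $m$ or $n$ already gives the bound directly, and the optimal partition degree $D$ I am about to choose is in any case not in the valid range.

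I apply Lemma~\ref{Partition} to $P$ with degree $D:=\lceil m^{1/2}n^{-1/4}\rceil$, obtaining (after passing to its squarefree part) a polynomial $Q$ of degree $O(D)$ whose complement decomposes into $O(D^3)$ cells, each meeting $P$ in $O(m/D^3)$ points. Let $P_0:=P\cap Z(Q)$, $P_1:=P\setminus Z(Q)$, and split $L=L_0\sqcup L_1$ according to whether a line lies in $Z(Q)$. Then $I(P_1,L_0)=0$; each line of $L_1$ meets $Z(Q)$ in at most $D$ points, giving $I(P_0,L_1)\le nD$; and each line of $L_1$ crosses at most $D+1$ cells, so $\sum_\tau n_\tau\le(D+1)n$. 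Applying Szemer\'edi--Trotter inside each cell---after a generic projection of the lines to a plane, which preserves incidences and distinctness---and combining with H\"older's inequality yields
\[
I(P_1,L_1)=O\!\left(m^{2/3}n^{2/3}D^{-1/3}+m+nD\right).
\]

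The delicate term is $I(P_0,L_0)$. At every smooth point $p\in P_0$ every line of $L_0$ through $p$ is tangent to $Z(Q)$ at $p$, hence lies in the single plane $T_pZ(Q)$; by the coplanarity hypothesis at most $b$ lines of $L$ through $p$ lie in any such plane, so smooth points contribute only $O(bm)=O(m)$ incidences. For singular points I will use that the singular locus $\Sigma\subseteq Z(Q)$ has dimension at most $1$ and algebraic degree $O(D^2)$: a line of $L_0$ not contained in $\Sigma$ meets $\Sigma$ in $O(D)$ points, contributing a further $O(nD)$ incidences, and the at most $O(D^2)$ lines of $L_0$ lying in $\Sigma$ will be absorbed by combining this count with a local analysis of the tangent cone at each singular point, where the coplanarity hypothesis again limits the number of lines per plane of the cone. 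Putting everything together,
\[
I(P,L)=O\!\left(m^{2/3}n^{2/3}D^{-1/3}+nD+m\right),
\]
and the choice $D=m^{1/2}n^{-1/4}$ balances the first two terms at $m^{1/2}n^{3/4}$, producing the bound claimed in Theorem~\ref{Light}.

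The main obstacle will be the singular-point analysis. At a smooth point the hypothesis couples cleanly with the $2$-dimensional tangent plane, but at a singular point the tangent cone can contain an infinite family of lines and need not be a union of just a few planes, so the ``at most $b$ lines per plane'' constraint no longer bounds incidences in a single stroke. Extracting an $O(m+nD)$ contribution there requires using both the bounded-degree structure of $\Sigma$, so that only $O(D^2)$ lines of $L_0$ are entirely singular, and the coplanarity hypothesis applied to the tangent cone at each individual singular point. Everything else---the partition, the projection-plus-H\"older estimate in the cells, and the balancing of $D$---then follows the standard Guth--Katz template.
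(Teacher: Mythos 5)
Your outline tracks the paper's argument through the polynomial partition, the H\"older estimate in the cells, and the tangent-plane argument at smooth points of $Z(Q)$; those parts are right. But there are two linked gaps, and together they mean your proof as stated does not close.

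First, you restrict to $n^{1/2}\le m\le n^{3/2}$ with the remark that outside this range ``one of the linear terms $m$ or $n$ already gives the bound directly.'' This is not true as stated: for $n^{3/2}<m<n^2$ the Szemer\'edi--Trotter bound gives $O(m^{2/3}n^{2/3})$, which can exceed $m+n$, and nothing you have said rules this out. What the paper actually does is first discard points incident to at most $b$ lines (contributing $O(bm)=O(m)$ incidences); every surviving point is then incident to $b+1$ lines of which no $b+1$ are coplanar, hence is a \emph{joint} of $L$, and the Guth--Katz joints theorem forces $m=O(n^{3/2})$. That reduction is not cosmetic --- it is also what guarantees, with $D=am^{1/2}/n^{1/4}$ and $a$ small, that $D^2<n/2$, which you need below.

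Second, and more seriously, your treatment of the lines of $L_0$ lying in the singular locus $\Sigma$ is the actual crux, and the mechanism you propose (``local analysis of the tangent cone at each singular point, where the coplanarity hypothesis again limits the number of lines per plane of the cone'') does not work in the form you describe --- as you yourself note, the tangent cone at a singular point can contain many pairwise non-coplanar lines, so the hypothesis gives no per-point control there. The paper's resolution is not a local argument at all: it discards lines of $L_0$ meeting $P_0^s$ in at most $D$ points (contributing $O(nD)$), observes that every remaining line lies in $Z(f,f')$ for some nonvanishing first partial $f'$, and by B\'ezout there are fewer than $D^2<n/2$ such lines. One then \emph{inducts on} $|L|$: the surviving lines are fewer than $n/2$, the coplanarity hypothesis is inherited by the subproblem, and the recurrence $I(m,n)\le c(m^{1/2}n^{3/4}+m+n)+I(m,n/2)$ solves to the claimed bound. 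Your proposal contains no induction and no mechanism to replace it, so the $O(D^2)$ singular lines are simply not accounted for. To repair the argument, add the joints-theorem reduction at the start and replace the tangent-cone heuristic with the induction on $|L|$.
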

\begin{proof}
As noted, the proof is a simplified version of the proof of Guth and Katz~\cite[Theorem 4.5]{GK2}.

Put $m=|P|$ and $n=|L|$.
We may ignore points that are incident to at most $b$ lines, as they contribute
a total of at most $bm=O(m)$ incidences. Let $P$ denote the set of remaining points.
Then each point of $P$ is a \emph{joint} of $L$, that is, it is incident to at least 
three non-coplanar lines of $L$. As shown by Guth and Katz~\cite{GK1}, we have $m=|P|=O(n^{3/2})$.

Assume that $m\ge n^{1/2}$ (otherwise we have the trivial bound $I(P,L)=O(n)$ from the Szemer\'edi--Trotter theorem).
Apply Lemma \ref{Partition} to $P$, to obtain a polynomial $f\in\R[x,y,z]$ of degree $D=am^{1/2}/n^{1/4}$, for some
sufficiently small constant $a$, so that the complement of the zero set $Z(f)$ of $f$ consists of $O(D^3)$ open connected cells,
each containing at most $O(m/D^3)$ points of $P$.

Now we show that the number of incidences involving lines
that are not fully contained in $Z(f)$ is $O(m^{1/2}n^{3/4}+m)$.  Let $m_i$ denote the number
of points contained in cell $O_i$, and $n_i$ denote the number of lines intersecting $O_i$.  Note
that each line can intersect $Z(f)$ at most $D$ times, so it can intersect at most $D+1$ components,
i.e. $\sum_i n_i\leq n(D+1)$.  Using Holder's inequality and the Szemer\'edi-Trotter bound, the
total number of incidences is then bounded (up to a constant factor) by
\begin{align}\label{noind}
\sum_i\left(m_i^{2/3}n_i^{2/3}+m_i+n_i\right)&\leq\left(\sum_im_i^2\right)^{1/3}
\left(\sum_in_i\right)^{2/3}+m+(D+1)n\\&=\left(D^3\cdot O\left(\frac m{D^3}\right)^2
\right)^{1/3}\left(O(nD)\right)^{2/3}+m+O(nD)\nonumber\\&=O(m^{2/3}n^{2/3}D^{-1/3}
+m+nD)=O(m^{1/2}n^{3/4}+m).\nonumber
\end{align}
(Note that this also bounds the number of incidences between lines of this kind and points on $Z(f)$.)

Let $P_0 := P\cap Z(f)$, and let $L_0$ denote the subset of the lines $\ell\in L$ that
are fully contained in $Z(f)$. It remains to bound $I(P_0,L_0)$.

We may assume, without loss of generality, that $f$ is square-free.
Consider a point $p\in P_0$ that is a non-singular point of $f$. Then all the lines
of $L_0$ that are incident to $p$ lie in the tangent plane to $Z(f)$ at $p$, and,
by assumption, there are at most $b$ such lines. Hence, the non-singular points in $P_0$
contribute a total of at most $bm=O(m)$ to $I(P_0,L_0)$. 
Let $P_0^s$ denote the set of the points in $P_0$ that are singular points of $Z(f)$. 
It now remains to bound the quantities $I(P_0^s,L_0)$.

We may ignore lines of $L_0$ that are incident to at most $D$ points of $P_0^s$, because they 
contribute to $I(P_0^s,L_0)$ a total of at most $nD=O(m^{1/2}n^{3/4})$ incidences.  
Any remaining line is fully contained in the common zero set $Z(f,f')$ of $f$ and $f'$, 
where $f'$ is any of the first-order partial derivatives of $f$ that is not identically zero.
As argued in \cite{EKS,GK1}, by B\'ezout's Theorem, the number of such lines is at most $D(D-1)<D^2$.
By choosing $a$ sufficiently small, we have 
$D^2 = a^2 m/n^{1/2} < n/2$ (because $m=O(n^{3/2})$), and we can bound the number of remaining
incidences by the maximum number of incidences involving (at most) $m$ points and (at most) $n/2$ lines.

By a simple inductive argument\footnote{%
  Clearly, the condition that each point is incident to at most $b$ coplanar lines continues to hold 
  in the inductive subproblems.}
on $|L|$ we can show that
$$
I(P,L)\leq C\left(|P|^{1/2}|L|^{3/4}+|P|+L\right)
$$
for a suitable sufficiently large constant $C$.  Indeed, for the induction step, writing $m'=|P_0^s|$,
the bounds collected so far in (\ref{noind}) add up to at most $c(m^{1/2}n^{3/4}+(m-m')+n)$, for
some suitable constant $c$.  Adding this to the inductive bound for the remaining incidences, we get the
bound
\begin{align*}
I(P,L)&\leq c\left(m^{1/2}n^{3/4}+(m-m')+n\right)+C\left(m^{1/2}(n/2)^{3/4}+m'+(n/2)\right)\\&\leq C(m^{1/2}n^{3/4}+m+n)
\end{align*}
by choosing $C$ sufficiently large.
\end{proof}

Clearly, the above theorem applies to light-like lines, with $b=2$, because
the double cone centered at any point can only intersect a plane through that point 
in at most two lines.  The theorem can also apply when we restrict the direction of the lines in
a different way. For example, we recall the identification of lines in $\R^3$ through the origin
with points in $\R\mathbb P^2$ (representing the direction of the line), and
take some irreducible nonlinear algebraic curve $\gamma$ of constant degree $b$
in $\R\mathbb P^2$, and restrict the directions of our lines to lie on this curve. 
Then at most $b=O(1)$ lines through any point can be coplanar (since the directions of coplanar lines
through the origin lie on a line in $\R\mathbb P^2$, and such a line can intersect 
a curve of degree $b$ in at most $b$ points).  The light-like lines arise when $\gamma$ is a circle.

\subsection{Incidences with Legendrian lines}

We call a line in $\R^3$ \emph{Legendrian} (see, e.g., Buczy\'nski \cite{Buc} for a more general
discussion of Legendrian subvariaties)
if it is orthogonal to the vector field $\F(x,y,z) := (y,-x,1)$ at each of its points. 
Legendrian lines arise as a special case of generalized flags, where the flags are determined 
by the action of the symplectic group $Sp(4)$ instead of the general linear group $GL_4(\R)$.
See the concluding section for details concerning this interpretation. For readers unfamiliar
with this theory, we stress that this is only a motivation for considering Legendrian lines.
Once the motivation is accepted, we face a standard (and in our opinion interesting)
incidence problem, involving a special class of lines in three dimensions.

We explore some geometric properties of Legendrian lines. 

First, we note that if a line $(a+ut,b+vt,c+wt)$ is orthogonal to $\F(a,b,c) = (b,-a,1)$ 
at one point $(a,b,c)$, it is orthogonal to $\F$ everywhere, because the scalar product
$(u,v,w)\cdot(b+vt,-a-ut,1) = bu-av+w$ is independent of $t$.
This means that for every point $(a,b,c)$, we have an associated plane $\pi(a,b,c)$, 
namely, the plane through $(a,b,c)$ with normal vector $(b,-a,1)$, so that every 
line in $\pi(a,b,c)$ through $(a,b,c)$ is Legendrian.  

Moreover, let $\pi$ be any non-vertical plane, write its normal as $(b,-a,1)$, and
let $p$ be the unique point on $\pi$ with $x$-coordinate $a$ and $y$-coordinate $b$.
Then, by the preceding argument, every line incident to $p$ and contained in $\pi$ is 
Legendrian. On the other hand, any other point $q\ne p$ contained in $\pi$ is incident
to exactly one Legendrian line, which is the line through $q$ orthogonal to both
$(b,-a,1)$ and to $(q_y,-q_x,1)$. We refer to $p$ as the \emph{Legendrian point} of $\pi$. 

We now apply these facts, in a manner that resembles the proof of Theorem \ref{Light},
and establish the following upper bound.
\begin{theorem}\label{Legendrian}
Let $P$ be a finite set of distinct points and $L$ a finite set of distinct non-vertical 
Legendrian lines in $\R^3$.  Then
$$
I(P,L) = O\left(|P|^{1/2}|L|^{3/4}+|P|+|L|\right).
$$
\end{theorem}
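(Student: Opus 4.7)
The plan is to follow the proof of Theorem~\ref{Light} almost verbatim, replacing the ``at most $b$ coplanar lines through each point'' hypothesis with the intrinsic geometry of Legendrian lines: every Legendrian line through a point $p$ lies in the unique plane $\pi(p)$ with normal $(p_y,-p_x,1)$. I apply polynomial partitioning to $P$ with degree $D = a|P|^{1/2}/|L|^{1/4}$ as before; the contribution from incidences with lines not fully contained in $Z(f)$ is then bounded by $O(|P|^{1/2}|L|^{3/4} + |P| + |L|)$ by the identical H\"older--plus--Szemer\'edi--Trotter computation in~(\ref{noind}), since that step depends only on the total number of points and lines.

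For the on-$Z(f)$ part, at any non-singular point $p\in P\cap Z(f)$ a Legendrian line through $p$ that lies in $Z(f)$ is forced into $T_p Z(f)\cap \pi(p)$. When $T_pZ(f)\ne \pi(p)$ this intersection is a single line, so such a point accounts for at most one incidence with $L_0:=\{\ell\in L:\ell\subseteq Z(f)\}$, yielding $O(|P|)$ incidences from all such ``generic'' points. Singular points, together with the ``Legendrian-tangent'' points (where $T_p Z(f)=\pi(p)$), require separate handling.

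The hardest step is controlling the Legendrian-tangent locus. The condition $\nabla f(p)\parallel(p_y,-p_x,1)$ is equivalent to vanishing of the cross product, which produces three polynomials $g_1 = f_y+xf_z$, $g_2 = yf_z-f_x$, $g_3 = -xf_x-yf_y$, each of degree at most $D$. I expect to prove, via a Frobenius-type argument, that on every irreducible component of $Z(f)$ at least one $g_j$ is not identically zero: the vector fields $X = \partial_x - y\partial_z$ and $Y = \partial_y + x\partial_z$ span the Legendrian plane distribution, but their Lie bracket $[X,Y]= 2\partial_z$ is not in their span, so no $2$-dimensional surface can be tangent to $\pi(\cdot)$ everywhere. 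Consequently, the Legendrian-tangent locus is contained in a curve $Z(f)\cap Z(g_j)$ of degree at most $D^2$, and the analogous Bezout bound applies to the singular locus via any nonzero partial derivative of $f$.

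With the bad (singular or Legendrian-tangent) points lying on a curve of total degree $O(D^2)$, any line in $L_0$ meeting more than $D$ such points must lie entirely in that curve; there are at most $O(D^2) \le |L|/2$ such lines once $a$ is small enough, while the remaining lines contribute at most $|L|\cdot D = O(|P|^{1/2}|L|^{3/4})$ bad-point incidences. Since being Legendrian is preserved in subproblems, one now closes the argument by induction on $|L|$ exactly as in Theorem~\ref{Light}. The inequality $D^2 \le |L|/2$ requires $|P| = O(|L|^{3/2})$; when $|P|$ exceeds this threshold (a regime guaranteed by the joints bound in Theorem~\ref{Light} but not available here, since Legendrian lines through a point are coplanar), the plan is to partition $P$ into groups of size $\Theta(|L|^{3/2})$, apply the main estimate to each group, and sum to obtain a total of $O(|P|)$ incidences, which already dominates the claimed bound since $|P|^{1/2}|L|^{3/4} \le |P|$ in that range.
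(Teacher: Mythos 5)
Your proposal is correct in outline and proves the statement, but it substitutes a genuinely different key lemma for the surface-case analysis. The paper, after the same partitioning and cellular (H\"older plus Szemer\'edi--Trotter) step, factors $f$ into irreducible components, assigns points and lines to components, treats planar components via the ``Legendrian point of a plane'' observation (every other point of a non-vertical plane lies on exactly one Legendrian line of that plane), and treats non-planar components with the Elekes--Kaplan--Sharir/Guth--Katz singular-and-flat-point machinery (flatness polynomials of degree about $3\deg f_i$). You instead exploit the Legendrian structure directly: two coplanarity constraints at a smooth point $p$ (all lines of $Z(f)$ through $p$ lie in $T_pZ(f)$, all Legendrian lines through $p$ lie in $\pi(p)$) give at most one incidence unless $T_pZ(f)=\pi(p)$, and the locus where the two planes coincide is cut out by the three degree-$\le D$ components of $\nabla f\times(y,-x,1)$, which cannot all vanish identically on a two-dimensional component because the contact distribution spanned by $\partial_x-y\partial_z$ and $\partial_y+x\partial_z$ is non-integrable ($[X,Y]=2\partial_z$ is transverse to it). This replaces the flat-point apparatus and handles planes and non-planes uniformly (a plane is Legendrian-tangent at a single point), which is arguably cleaner and more tailored to the problem; the paper's route is more generic and reuses off-the-shelf lemmas. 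Your handling of the regime $|P|>|L|^{3/2}$ (forced here, as you correctly note, because Legendrian lines through a point are coplanar so the joints bound is unavailable) also differs: the paper reruns the argument with degree $D=a|L|^{1/2}$, whereas you partition $P$ into blocks of size $\Theta(|L|^{3/2})$ and sum, getting $O(|P|)$; this is valid and simpler. Two points deserve explicit care in a write-up, though neither is a gap: the nonvanishing polynomial $g_j$ must be chosen per irreducible component (take the union of the curves $Z(f_i)\cap Z(g_{j(i)})$, whose total degree is still $O(D^2)$, and likewise choose the partial derivative per component for the singular locus), and in the induction on $|L|$ you should establish the case $|P|\le|L|^{3/2}$ first and then derive the large-$|P|$ case from it at the same $|L|$, so that the grouping step is not circular; also note that the recursive subproblem $(P_0^s,\,L_0$-remainder$)$ may itself fall into the large-$|P|$ regime, which your two-regime formulation of the inductive claim accommodates.
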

\begin{proof}
Assume first that $n^{1/2}\le m\le n^{3/2}$ (when $m\le n^{1/2}$ we have the trivial bound $I(P,L) = O(n)$). 
Apply Lemma \ref{Partition} to $P$, to obtain a polynomial  $f\in R[x,y,z]$ of degree $D=am^{1/2}/n^{1/4}$,
for some suitable constant $a$, so that the complement 
of the zero set $Z(f)$ of $f$ consists of $O(D^3)$ open connected cells,
each containing at most $O(m/D^3)$ points of $P$. Arguing as in the preceding proof, it is easy to 
show that the number of incidences involving lines that are not fully contained in $Z(f)$ is $O(m^{1/2}n^{3/4})$.

Let $P_0:=P\cap Z(f)$, and let $L_0$ denote the subset of lines $\ell \in L$ that are 
fully contained in $Z(f)$. The lines in $L\setminus L_0$ contribute at 
most $O(nD)=O(m^{1/2}n^{3/4})$ incidences with $P_0$ (since each of them crosses $Z(f)$ in at most $D$ points), 
so it only remains to bound $I(P_0,L_0)$. 

As above, we may assume, without loss of generality, that $f$ is square-free, and we decompose it
into distinct irreducible factors $f_1,\ldots,f_k$.
Assign each point $p\in P_0$ (resp., line in $L_0$) to the first zero set $Z(f_i)$
such that $p \in Z(f_i)$ (resp., $\ell \subset Z(f_i)$). As is easily seen, for each $\ell \in L_0$, 
the number of incidences $(p, \ell)$, where $p\in P_0$ is not assigned to the same $f_i$ as $\ell$ is at 
most $D$, yielding a total of at most $nD=O(m^{1/2}n^{3/4})$ incidences.  Therefore, it suffices to bound 
incidences involving points and lines assigned to the same component $Z(f_i)$. 

First, suppose that $Z(f_i)$ is a plane. Let $p \in P_0$ be assigned to $Z(f_i)$. As observed above, 
unless $p$ is the Legendrian point of $Z(f_i)$, $p$ is incident to at most one Legendrian line 
contained in $Z(f_i)$; if $p$ is the Legendrian point, we bound the number of its incidences by the 
trivial bound $n$. Summing over the at most $D$ planar components $Z(f_i)$, we obtain  a total of $O(m+nD)$ incidences.

If $Z(f_i)$ is not a plane, the points $p\in P_0$ that are assigned to $Z(f_i)$ and are incident to at most 
two lines of $L_0$ yield at most $2m$ incidences. The other points of $Z(f_i)$ are either \emph{singular}
points or \emph{flat} points of $Z(f_i)$; as in Guth and Katz~\cite{GK1} and in Elekes et al.~\cite{EKS},
a non-singular point of $Z(f_i)$ is said to be (linearly) flat if it is incident to (at least) three lines fully 
contained in $Z(f_i)$. As argued in these works, each singular point lies on the one-dimensional singular 
locus of $Z(f_i)$, a curve of degree smaller than $\deg(f_i)^2$, and 
each flat point lies (in $Z(f_i)$ and) in the common zero set of three polynomials, each of degree smaller than $3\deg(f_i)$. 
Moreover (as shown in \cite{EKS,GK1}), since $Z(f_i)$ is not a plane, at least one of these polynomials 
does not vanish identically on $Z(f_i)$, and so the flat points lie on a curve of degree at most 
$3\deg(f_i)^2$. A line $\ell \in L_0$ assigned to $Z(f_i)$ that is not contained in either of this pair
of curves yields at most $4\deg(f_i)$ incidences (because each such incidence is a proper crossing 
of the line with either the zero set of one of the partial derivatives of $f_i$ or the zero set of
one of the ``flatness polynomials'' mentioned above. Summing over all components, this results in
a total of at most $4nD$ incidences. The remaining lines assinged to $Z(f_i)$ are contained in an
algebraic curve (the union of the two curves) of degree at most $4\deg(f_i)^2$, so there can be at most these many lines of the
latter kind (see, e.g., \cite{EKS,GK1}). Summing over all (non-planar) components, we get a total
of at most $4D^2$ lines, which, with a suitable choice of the constant $a$ in the definition of $D$,
is at most, say, $n/2$ (since $m\le n^{3/2}$). An induction applied to these remaining lines finishes 
the proof for this case, just as in Guth and Katz~\cite{GK2}, and in the preceding proof.

Finally, when $m > n^{3/2}$, we apply the same reasoning, taking the degree of $f$ to be
$D=an^{1/2}$, for a sufficiently small constant $a$. The analysis proceeds more or less identically 
to the case $m\le n^{3/2}$, as it does in the cited works \cite{EKS,GK2}.

In both cases, we thus get the asserted bound
$$
I(P,L) = O\left(|P|^{1/2}|L|^{3/4}+|P|+|L|\right).
$$
\end{proof}

We do not know whether the bound in Theorem~\ref{Legendrian} is tight, and leave this as an open question
for further research.

\subsection{Flags with limited point/plane incidences with a line}

Now we move on to our third variant of the three-dimensional incidence problem.  
Here we consider the number of flags determined by sets of points, lines, and planes in $\R^3$
that satisfy the constraints in the following theorem.  

\begin{theorem}\label{3D'}
Let $P$ (resp., $L$, $S$) be a finite set of distinct points (resp., lines, planes) in $\R^3$, 
so that each line of $L$ contains at most $b$ points of $P$ and is contained in at most $b$ planes of $S$,
for some parameter $b$. Then
\begin{align*}
I(P,L,S) & = O\left( \min \{b^2|L|, \right. \\
& \left. (|P|+|S|)|L|^{1/2} + (|P||S|^{1/2}+|S||P|^{1/2})\log b + (|P|+|S|)b \right) . 
\end{align*}
In particular, when each of $|P|$, $|L|$, and $|S|$ is at most some number $N$, we have
$$
I(P,L,S) = O\left(\min \{b^2N,\;N^{3/2}\log b+bN\}\right) .
$$
\end{theorem}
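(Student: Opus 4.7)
The trivial bound $I(P,L,S) \le b^2|L|$ is immediate, since each line $\ell \in L$ contributes $\mu(\ell)\nu(\ell) \le b^2$ flags; here $\mu(\ell) = |P\cap\ell|\le b$ and $\nu(\ell) = |\{\sigma \in S : \ell\subset \sigma\}|\le b$.

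For the refined bound the plan is to apply polynomial partitioning to $P$, in the spirit of the proofs of Theorems~\ref{Light} and~\ref{Legendrian}. I would apply Lemma~\ref{Partition} to $P$ with a degree $D$ tuned to the problem, producing a polynomial $f$ whose complement $\R^3 \setminus Z(f)$ is the union of $O(D^3)$ open cells, each containing $O(|P|/D^3)$ points of $P$. Flags $(p,\ell,\sigma)$ then split according to whether $p$ lies in an open cell or on $Z(f)$. For a cell $O_i$, set $P_i = P \cap O_i$ and $L_i = \{\ell \in L : \ell \cap O_i \ne \emptyset\}$; then $|P_i| = O(|P|/D^3)$ and $\sum_i |L_i| \le |L|(D+1)$, since a line meets at most $D+1$ cells. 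Applying the induction hypothesis to each cell (the $b$-constraint is clearly inherited) and aggregating by H\"older's inequality on the resulting bound delivers the dominant $(|P|+|S|)|L|^{1/2}$ contribution (together with part of the $(|P|+|S|)b$ term).

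For the zero-set flags I would assume $f$ is square-free and argue as in Theorems~\ref{Light} and~\ref{Legendrian}. Lines not contained in $Z(f)$ meet $Z(f)$ in at most $D$ points each, contributing $O(|L|D)$ incidences with $P \cap Z(f)$. Lines contained in $Z(f)$ are handled by decomposing $Z(f)$ into irreducible components: planar components contribute via the 2D Szemer\'edi--Trotter bound, sharpened by the constraint $\mu(\ell)\le b$; non-planar components contain at most $O(D^2)$ lines of $L$ (by B\'ezout's theorem applied to $f$ and a nonzero partial derivative), which drives the induction on $|L|$. The $\log b$ factor would emerge from a dyadic decomposition over the values $\nu(\ell) \in [2^j, 2^{j+1})$ with $O(\log b)$ levels, giving the $(|P||S|^{1/2}+|S||P|^{1/2})\log b$ term; a symmetric decomposition by $\mu(\ell)$ supplies the symmetric half.

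The main obstacle will be the interplay between polynomial partitioning and the two-sided $b$-constraint. Since the problem does not directly bound the maximum number of lines of $L$ on any plane of $\R^3$ (the parameter $B$ in the Guth--Katz bound (\ref{eq:gk})), the cell analysis and the planar-component analysis of $Z(f)$ each require a further split of $L$ into ``plane-rich'' and ``plane-poor'' subsets, in the spirit of (\ref{eq:gk}). This is analogous to, but more delicate than, the splits used in Theorems~\ref{Light} and~\ref{Legendrian}, because here we must simultaneously track the point multiplicity $\mu(\ell)$ and the plane multiplicity $\nu(\ell)$ through the recursion, and balance the choice of $D$ with the dyadic scale $2^j$ so that the three summands in the target bound emerge in the correct regimes.
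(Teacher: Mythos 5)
Your trivial bound $b^2|L|$ is fine, but the rest of the proposal is a plan with the decisive step missing, and it is not the route the paper takes. The paper does not run a new polynomial-partitioning argument at all: it writes $I(P,L,S)=\sum_{k=1}^b\sum_{l=1}^b N_{\ge k,\ge l}$, where $N_{\ge k,\ge l}$ counts lines with at least $k$ points of $P$ and at least $l$ planes of $S$, and then bounds each $N_{\ge k,\ge l}$ by an off-the-shelf application of the Guth--Katz bound (\ref{eq:gk}). The key geometric observation --- absent from your proposal --- is that the plane-richness hypothesis itself supplies the missing coplanarity control: if every line of $L_0$ lies in at least $l$ planes of $S$, then no plane (in $S$ or not) can contain more than $B=|S|/(l-1)$ lines of $L_0$, since the $\ge l-1$ other planes through distinct coplanar lines are all distinct. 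Combining (\ref{eq:gk}) with the trivial lower bound $I(P,L_0)\ge k|L_0|$ gives $N_{\ge k,\ge l}=O\bigl(|P|^2/k^4+|P||S|^{1/2}/(k^{3/2}l^{1/2})+|P|/k\bigr)$ (for $k\ge l$, the case $l>k$ being handled by point--plane duality). Summing, with the trivial bound $N_{\ge k,\ge l}\le|L|$ used for $k\le t$ and the above for $k>t$, and choosing $t=\bigl((|P|^2+|S|^2)/|L|\bigr)^{1/4}$, produces exactly the three terms of the theorem: the $(|P|+|S|)|L|^{1/2}$ term comes from balancing $|L|t^2$ against $(|P|^2+|S|^2)/t^2$, and the $\log b$ comes from the harmonic sum $\sum_{k\le b}1/k$.

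By contrast, your proposed partitioning-plus-induction scheme leaves unresolved precisely the difficulty that this level-set trick dispatches: you acknowledge that there is no a priori bound on the number of lines of $L$ in a plane and that one must somehow track both multiplicities $\mu(\ell)$ and $\nu(\ell)$ through the recursion, but you do not say how. Moreover, your claim that H\"older aggregation over the cells delivers the $(|P|+|S|)|L|^{1/2}$ term is unsubstantiated and does not match how that term actually arises; a partition of $P$ in the style of Theorems~\ref{Light} and~\ref{Legendrian} naturally produces terms like $|P|^{1/2}|L|^{3/4}$, not $(|P|+|S|)|L|^{1/2}$, and your dyadic decomposition in $\nu(\ell)$ is only asserted, not carried out, so the $(|P||S|^{1/2}+|S||P|^{1/2})\log b$ term is not established. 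As written, the proposal identifies the right ingredients in spirit (rich/poor splits, duality between $P$ and $S$, a logarithmic loss over multiplicity scales) but lacks the one idea --- plane-richness bounds coplanarity, so (\ref{eq:gk}) applies directly to each richness class --- that turns the sketch into a proof.
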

\begin{proof}
For each line $\ell\in L$, let $P_\ell$ denote the set of points of $P$ that are
incident to $\ell$, and let $S_\ell$ denote the set of planes of $S$ that contain $\ell$.
By assumption, we have $p_\ell:=|P_\ell|$, $s_\ell:=|S_\ell|\le b$. Our goal is to bound
$$
I(P,L,S) = \sum_{\ell\in L} p_\ell s_\ell .
$$
Let $N_{k,l}$ (resp., $N_{\ge k,\ge l}$) denote the number of lines $\ell\in L$ for which
$p_\ell = k$ and $s_\ell = l$ (resp., $p_\ell \ge k$ and $s_\ell \ge l$).
Then we have
$$
I(P,L,S) = \sum_{k=1}^b \sum_{l=1}^b kl N_{k,l} . 
$$
As is easily checked, we have
$$
N_{k,l} = N_{\ge k,\ge l} - N_{\ge k+1,\ge l} - N_{\ge k,\ge l+1} + N_{\ge k+1,\ge l+1} .
$$
Substituting this and rearranging the preceding sum, we get
\begin{align*} 
I(P,L,S) & = \sum_{k=1}^b \sum_{l=1}^b kl N_{k,l} \\
& = \sum_{k=1}^b \sum_{l=1}^b kl \Bigl( N_{\ge k,\ge l} - N_{\ge k+1,\ge l} - N_{\ge k,\ge l+1} + N_{\ge k+1,\ge l+1} \Bigr) \\
& = \sum_{k=1}^b \sum_{l=1}^b \Bigl( kl-(k-1)l-k(l-1)+(k-1)(l-1) \Bigr) N_{\ge k,\ge l} \\
& = \sum_{k=1}^b \sum_{l=1}^b N_{\ge k,\ge l} .
\end{align*}
We proceed to estimate $N_{\ge k,\ge l}$. We fix $k$ and $l$, and assume, without loss of generality, 
that $l\le k$. Otherwise, we apply a point-plane duality to 3-space, turn $P$ into a set of $|P|$ dual 
planes, $S$ into a set of $|S|$ dual points, and $L$ into a set of $|L|$ dual lines, so that a point-line
(resp., line-plane) incidence in the primal space become a plane-line (resp., line-point) incidence in 
dual space, and vice versa, so the number of flags in the dual setting is equal to that in the primal 
setting. In this manner the roles of $k$ and $l$ are interchanged, and this justifies our assumption.

In other words, it suffices to bound the sum
\begin{equation} \label{ipls}
I'(P,L,S) := \sum_{k=1}^b \sum_{l=1}^k N_{\ge k,\ge l} .
\end{equation} 
We thus fix $1\le l\le k$, and consider the set
$$
L_0 := \{\ell\in L \mid p_\ell\ge k \text{ and } s_\ell \ge l \} ;
$$ 
these are the lines counted in $N_{\ge k,\ge l}$. 

We claim that no plane can contain more than $B:= |S|/(l-1)$ lines of $L_0$. Indeed, fix a plane $\pi$
(not necessarily in $S$), and observe that each line $\ell\in L_0$ that is contained in $\pi$ is also 
contained in at least $l-1$ planes of $S$ (other than $\pi$), and that all these planes are distinct, over all lines
$\ell\in\pi$, so the claim follows. (For $l=1$ the bound is simply $|S|$.)

We can therefore apply the incidence bound of Guth and Katz~\cite{GK2}, as mentioned in (\ref{eq:gk})
in the introduction, to obtain
\begin{align*}
I(P,L_0) & = O\Bigl( |P|^{1/2}|L_0|^{3/4} + |P|^{2/3}|L_0|^{1/3}B^{1/3} + |P| + |L_0| \Bigr) \\
& = O\Bigl( |P|^{1/2}|L_0|^{3/4} + |P|^{2/3}|S|^{1/3}|L_0|^{1/3}/l^{1/3} + |P| + |L_0| \Bigr) .
\end{align*}
On the other hand, by definition, we have $I(P,L_0) \ge k|L_0|$. This implies that
$$
k|L_0| = O\Bigl( |P|^{1/2}|L_0|^{3/4} + |P|^{2/3}|S|^{1/3}|L_0|^{1/3}/l^{1/3} + |P| + |L_0| \Bigr) ,
$$
or, assuming that $k$ is at least some sufficiently large constant $k_0$,
\begin{equation} \label{nkl}
|L_0| = O\left( \frac{|P|^2}{k^4} + \frac{|P||S|^{1/2}}{k^{3/2}l^{1/2}} + \frac{|P|}{k} \right) .
\end{equation}

We now return to the sum $I'(P,L,S)$ in (\ref{ipls}). For technical reasons that will become
clear shortly, we fix a threshold parameter $t<b$ (larger than the constant $k_0$ used above), 
and split the sum in (\ref{ipls}) as
\begin{align}\label{tsplit}
\sum_{k=1}^t \sum_{l=1}^k N_{\ge k,\ge l} +
\sum_{k=t+1}^b \sum_{l=1}^k N_{\ge k,\ge l} .
\end{align}
In the first sum we use the trivial bound $N_{\ge k,\ge l} \le |L|$, making the sum bounded by
$O(|L|t^2)$. We then substitute the bound in (\ref{nkl}) into the second sum, and get 
\begin{align*}
\sum_{k=t+1}^b \sum_{l=1}^k N_{\ge k,\ge l} & =
O\Bigl( \sum_{k=t+1}^b \sum_{l=1}^k 
\Bigl( \frac{|P|^2}{k^4} + \frac{|P||S|^{1/2}}{k^{3/2}l^{1/2}} + \frac{|P|}{k} \Bigr) \Bigr) \\
& = O\Bigl( \sum_{k=t+1}^b 
\Bigl( \frac{|P|^2}{k^3} + \frac{|P||S|^{1/2}}{k} + |P| \Bigr) \Bigr) \\
& = O\Bigl( \frac{|P|^2}{t^2} + |P||S|^{1/2}\log b + |P|b \Bigr) .
\end{align*}
In the complementary case, where $k\le l$, we get the symmetric bound (where the roles of $P$ and $S$ are interchanged)
$$
\sum_{l=1}^b\sum_{k=1}^lN_{\ge k,\ge l} =
O\Bigl(|L|t^2 + \frac{|S|^2}{t^2} + |S||P|^{1/2}\log b + |S|b \Bigr) .
$$
Choosing ${\displaystyle t=\left(\frac{|P|^2+|S|^2}{|L|}\right)^{1/4}}$,
the preceding reasoning is easily seen to yield the bound 
$$
I(P,L,S) = O\left(
(|P|+|S|)|L|^{1/2} + (|P||S|^{1/2}+|S||P|^{1/2})\log b + (|P|+|S|)b \right) .
$$
When ${\displaystyle b < \left(\frac{|P|^2+|S|^2}{|L|}\right)^{1/4}}$, 
the second sum in (\ref{tsplit}), and its counterpart in the case $k\leq l$, become vacuous, and we get the bound
$I(P,L,S) = O(|L|b^2)$.  This completes the proof of the theorem.
\end{proof}

\paragraph{Lower bounds.}
We now show that the bound in Theorem \ref{3D'} is almost tight in the worst case (except for the $\log b$ factor),
for sets $P$, $L$, $S$ with $|P|=|L|=|S|=N$; actually, because of the asymptotic nature of the bound, it suffices to
consider sets whose sizes are (at most) proportional to $N$.  
Recall that in this case the bound that we want to establish is
$I(P,L,S) = O\left(\min \{b^2N,\;N^{3/2}\log b+bN\}\right)$.

\medskip

\noindent
(i) To obtain a configuration with $\Theta(bN)$ flags, take $N/b$ parallel lines, 
each incident to $b$ points and contained in $b$ planes.  In total we have $N$ points, 
at most $N$ lines, and $N$ planes, with $(N/b)\cdot b\cdot b= bN$ flags.

\medskip

\noindent
(ii) In the case under consideration, we have
${\displaystyle t=\left(\frac{|P|^2+|S|^2}{|L|}\right)^{1/4}} = \Theta(N^{1/4})$.
Assume that $b \ge t = \Omega\left( N^{1/4}\right)$. We adapt the lower bound construction
of Guth and katz~\cite{GK2}, which in turn is based on the aforementioned construction
of Elekes~\cite{El}. Concretely, we fix two integer parameters $k,l$, and construct the
$k\times 2kl\times 2kl$ integer grid 
$$
P = [1,k]\times [1,2kl]\times [1,2kl] ,
$$
(where $[1,m]$ is a shorthand notation for the set
of integers $\{1,2,\ldots,m\}$);
we have $|P|=4k^3l^2$. For the set of lines we take
$$
L = \{ y=ax+b,\;z=cx+d \mid a,c\in [1,l],\; b,d\in [1,kl] \};
$$
we have $|L|=k^2l^4$. By taking $k=l^2$ and putting $N=l^8$, we get $|P|=4N$ and $|L|=N$.
The sets $P$ and $L$ have the property that each line of $L$ is incident to exactly
$k$ points of $P$.

So far, this is the construction used in \cite{GK2}. We now construct the set $S$
of planes. For each point $p=(x,y,z)$ in $P$, we take $L_p$ to be the set of all 
lines of $L$ that pass through $p$. We note that, for a constant fraction of the
choices of $p\in P$, and $a,c\in [1,l]$, the integers $b=y-ax$, $d=z-cx$ belong to $[1,kl]$,
and the quadruple $(a,b,c,d)$ determines a line of $L$ that passes through $p$.
In other words, a constant fraction of points $p\in P$ are incident to $\Omega(l^2)$ lines of $L$ 
(the concrete constants of proportionality can easily be worked out).

For each point $p$ and each pair of distinct lines $\ell,\ell'\in L_p$, 
we form the plane spanned by $\ell$ and $\ell'$ and include it in $S$. Clearly,
the same plane in $S$ might arise for multiple choices of $p$, $\ell$, and $\ell'$.
To estimate $|S|$ we proceed as follows. 
For a constant fraction of the choices of $p\in P$, $a,c,a',c'\in [1,l]$, 
so that the pairs $(a,c)$ and $(a',c')$ are distinct, we get a pair of lines 
$\ell$, $\ell'\in L_p$, parallel to the respective vectors $(1,a,c)$, $(1,a',c')$.
The plane $\pi$ that they determine has a normal 
$$
{\bf n}_\pi = (1,a,c)\times (1,a',c') = (ac'-a'c,\;c-c',\;a'-a) ,
$$
so
${\bf n}_\pi$ is determined just from the choices of $a,c,a',c'\in [1,l]$, so there
are at most $l^4$ different normals. The number of planes $\pi$ with a given normal
$\bf n$ is at most the number of distinct scalar products $p\cdot {\bf n}$, over
$p\in P$. Noting that the coordinates of any normal lie in $[-l^2,l^2]\times [-l,l]\times [-l,l]$
(and those of the points of $P$ in $[1,k]\times [1,2kl]\times [1,2kl]$), the value of each scalar product
is a whole number of maximum absolute value $O(kl^2)$. Hence
$$
|S| = O(l^4\cdot kl^2) = O(kl^6) = O(N) .
$$
We now estimate $I(P,L,S)$. Pick a line $\ell\in L$, given by $y=ax+b$, $z=cx+d$.
Pick any point $p\in P\cap \ell$, and let $(a',c')$ be a pair in $[1,l]^2$ distinct
from $(a,c)$. For a constant fraction of these choices, the line passing through
$p$ in direction $(1,a',c')$ belongs to $L$, and spans with $\ell$ a plane in $S$.
An easy calculation shows that the normal vectors
\begin{align*}
{\bf n'} & = (1,a,c)\times (1,a',c') = (ac'-a'c,\;c-c',\;a'-a) , \\
{\bf n''} & = (1,a,c)\times (1,a'',c'') = (ac''-a''c,\;c-c'',\;a''-a)  
\end{align*}
have different directions if and only if the vectors $(c'-c,\;a'-a)$ and $(c''-c,\;a''-a)$ 
are not parallel. A standard argument in number theory 
(see, e.g., \cite{Ed} for a similar application thereof)
shows that there are $\Theta(l^2)$ choices of $a',c'$ that yield normals with distinct directions.
In other words, a constant fraction of lines of $L$ are incident to $\Theta(l^2)$ planes of $S$,
and no line is incident to more than $O(l^2)$ planes. (The latter statement follows since, for a
fixed line $\ell$, each plane $\pi$ containing $\ell$ is spanned by $\ell$ and another line
$\ell'$, and the direction of $\ell'$ suffices to determine $\pi$; since there are $O(l^2)$ 
such directions, the claim follows.)
Since each line of $L$ is incident to exactly $k$ points of $P$, the number of flags satisfies
the bound
$$
I(P,L,S) = \Omega(kl^2|L|) = \Omega(N^{3/2}) .
$$
Since both $k$ and $l^2$ are $O(N^{1/4})$, it follows that, with a suitable choice
of the constant of proportionality, each line is incident to at most $b=\Omega(N^{1/4})$ points of $P$ 
and is contained in at most $b$ planes of $S$. This completes the analysis for this case.

\medskip

\noindent
(iii) Finally, assume that $b\le t = O\left( N^{1/4}\right)$; that is, assume that $b^4 = O(N)$.
Apply the preceding construction with $b^4$ instead of $N$, and construct $N/b^4$ independent copies
thereof. We get in total $\Theta(N)$ points, lines, and planes, each line is incident to at most $b$
points and is contained in at most $b$ planes, and the number of flags is 
$\Omega((b^4)^{3/2}\cdot (N/b^4)) = \Omega(Nb^2)$, as required.

\section{Discussion} \label{Other}

\subsection{On generalized flags and Legendrian lines}

In this section we briefly review the following generalization of flags, which serves as a motivation
for studying incidences with Legendrian lines (Theorem~\ref{Legendrian}). This is a topic that
has received considerable attention in the theory of Lie groups. This discussion is fairly disjoint 
from the main study of the paper, and is included only to put things into perspective, as well
as in the hope that it will help to motivate additional kinds of incidence problems. We will not 
go into details concerning this topic; they can be found, e.g., in Baston and Eastwood~\cite{BE}.

Consider first a standard flag $\ff=(f_1,\ldots,f_d)$ in $\R^d$, where each $f_i$ is an $i$-dimensional subspace, 
and $f_i\subset f_{i+1}$ for $i=1,\ldots,d-1$.  (The flags we refer to in this discussion are the projective
versions of the affine flags which we have dealt with thus far.)  We can write $\ff$ as a full-rank $d\times d$ matrix
$M_\ff$, with rows $v_1,\ldots,v_d$, such that $f_i$ is the span of $v_{d-i+1},\ldots,v_d$
for each $i$; clearly, such a representation is not unique. The \emph{stabilizer} of this 
representation is the set of all non-singular upper triangular $d\times d$ matrices, in the 
sense that, for each such matrix $T$ and for each flag $\ff$, if $\ff$ is represented by 
some matrix $M$ then it is also represented by $TM$. We can thus interpret the set of all
flags in $\R^d$, the so-called (standard) \emph{flag variety},
as the quotient group $GL(d)/UT(d)$, where $GL(d)$ is the general linear group in 
$d$ dimensions, and $UT(d)$ is the set of all $d\times d$ invertible upper triangular matrices.  

Similarly, we may consider the following notion of \emph{generalized flags}.
Given a Lie group $G$ (in the above example, $G=GL(d)$, but we may also consider 
the symplectic group $Sp(d)$, or the orthogonal group $O(d)$, etc.),
a \emph{Borel} subgroup is a maximal Zariski closed and connected solvable subgroup. 
When $G=GL(d)$, (it is actually sufficient to restrict ourselves to the special 
linear group $SL(d)$), the subgroup $UT(d)$ of invertible upper triangular matrices is a 
Borel subgroup. A \emph{parabolic} subgroup of $G$ is any subgroup of $G$ that 
contains a Borel subgroup of $G$. 

When $G=GL(d)$, the flag variety of $G$ is, as mentioned above, the quotient group $GL(d)/UT(d)$.
In the general setup, if $G$ is a semisimple Lie group, the \emph{generalized flag variety}
for $G$ of type $P$ is $G/P$, where $P$ is a parabolic subgroup of $G$. The points of $G/P$ 
can be interpreted in terms of the more intuitive notion of ``flags''.
We demonstrate this generalization by considering two examples: the split orthogonal group 
$O(2,2)$, and the symplectic group $Sp(4)$; in  both cases we consider a nested sequence of real 
vector subspaces in $\R^4$ that are all annihilated by a cetain symmetric or symplectic form.

In the case of the group $O(2,2)$, a flag is a sequence of nested subspaces of $\R^4$,
so that some fixed symmetric form of signature $(2,2)$ vanishes on all its proper subspaces. 
This can be shown to be possible only for one- and two-dimensional subspaces.  
Identifying $\R^4\setminus\{0\}$ with $\R\mathbb P^3$, this will correspond to 
point-line pairs in $\R^3$. Concretely, if we simply choose the form to have the matrix
$$
\begin{bmatrix}1&0&0&0\\0&1&0&0\\0&0&-1&0\\0&0&0&-1\end{bmatrix},
$$
then a vector $(a,b,c,d)$ is annihilated by (i.e., has length $0$ with respect to) this form
if and only if $a^2+b^2=c^2+d^2$.  We identify $\R^4\setminus\{0\}$ with $\R\mathbb P^3$, 
by regarding each point $(a,b,c,d)\in\R^4\setminus\{0\}$ as the homogeneous coordinates
of a point in $\R\mathbb P^3$. Then, for $d\ne 0$, the corresponding affine point
$(x,y,z)=(\frac ad,\frac bd,\frac cd)$ satisfies $x^2+y^2-z^2=1$.  
In other words, the only admissible points must lie on this hyperboloid. In this case
the resulting incidence problem involves points and lines on a hyperboloid. Since each 
point can be incident to at most two lines of this kind, we get a trivial linear bound,
and an ``uninteresting'' incidence question.

Things brighten up when we pass to the symplectic group $Sp(4)$. Here flags correspond 
to sequences of nested subspaces of $\R^4$, so that some fixed symplectic form (i.e., a 
non-degenerate skew-symmetric form) vanishes on all the proper subspaces.  Again, this 
can be shown to be possible only for one- and two-dimensional subspaces, and the same
identification of $\R^4\setminus\{0\}$ with $\R\mathbb P^3$ will result in point-line pairs in $\R^3$.  
By the definition of a symplectic form, every vector has length $0$ with respect to the form.
Therefore the form annihilates all one-dimensional subspaces, and so all points in $\R^3$ 
are admissible. If we take our form to have the matrix
$$
T = \begin{bmatrix}0&1&0&0\\-1&0&0&0\\0&0&0&1\\0&0&-1&0\end{bmatrix},
$$
then it evaluates to zero on the affine span $\ell$ of a pair of points
$(a_1,b_1,c_1,d_1)$ and $(a_2,b_2,c_2,d_2)$, if and only if 
$$
(a_1, b_1, c_1, d_1) T \left( 
\begin{array}{c}
a_2 \\ b_2 \\ c_2 \\ d_2
\end{array}
\right) = 0 ,
$$
namely, $a_1b_2+c_1d_2=a_2b_1+c_2d_1$.  
Again, identifying $\R^4\setminus\{0\}$ with $\R\mathbb P^3$, we get $x_1y_2+z_1=x_2y_1+z_2$.  
This means that two points $(x_1,y_1,z_1)$ and $(x_2,y_2,z_2) = (x_1,y_1,z_1)+(u,v,w)$ 
lie on an admissible line if they satisfy the above condition. That is,
\begin{align*}
x_1(y_1+v) + z_1 & = (x_1+u)y_1 + z_1+w ,\quad\text{or} \\
y_1u-x_1v+w & = (u,v,w)\cdot (y_1,-x_1,1) = 0
\end{align*}
at every point $(x_1,y_1,z_1)\in\ell$.
In other words, for any point $(x_1,y_1,z_1)$, the admissible lines through that point 
lie on a plane with normal vector $(y_1,-x_1,1)$. These are exactly the Legendrian lines
whose incidences have been studied in Theorem~\ref{Legendrian}.

\subsection{Conclusion and Future Research}\label{Conclusion}
In this paper, we have studied several new problems in incidence geometry. 
These include a tight bound for the number of complete and partial flags in $\R^d$ and $\C^d$, 
as well as an almost tight bound on a three-dimensional variant of this problem, and two
bounds, resembling those obtained by Guth and Katz~\cite{GK2}, for two special point-line
incidence problems in three dimensions. One of these problems (involving Legendrian lines)
has been motivated by the theory of generalized flag varieties in the theory of Lie groups.

There are several interesting open problems that our work raises. The bounds obtained
in Theorems~\ref{Light} and~\ref{Legendrian} are not known to be tight, and it would
be interesting to obtain lower bounds for these problems (for the case of light-like lines,
the goal would be to improve the lower bound given in~\cite{SW}).

It would also be interesting to find additional (natural) classes of generalized 
flag varieties that lead to new incidence questions in three and higher dimensions.

\section*{Acknowledgements}
Saarik Kalia and Ben Yang would like to thank Larry Guth for suggesting this project and providing tools which were 
crucial in the discovery of our results.  They would also like to thank David Jerison and Pavel Etingof 
for repeatedly offering insightful input into our project and providing future directions of research.  
Finally, they would like to thank the SPUR program for allowing them the opportunity to conduct this research.

Work on this paper by Noam Solomon and Micha Sharir was
supported by Grant 892/13 from the Israel Science Foundation.
Work by Micha Sharir was also supported
by Grant 2012/229 from the U.S.--Israel Binational Science Foundation,
by the Israeli Centers of Research Excellence (I-CORE)
program (Center No.~4/11), and
by the Hermann Minkowski-MINERVA Center for Geometry
at Tel Aviv University.



\end{document}